\newcommand{\Z}{\mathbb Z}
\newcommand{\lam}{\lambda}
     \DeclareMathOperator{\codeg}{codeg}
\theoremstyle{plain}
\newtheorem{prop}{Proposition}[section]
\newtheorem{thm}[prop]{Theorem}
\newtheorem{cor}[prop]{Corollary}
\newtheorem{lem}[prop]{Lemma}
\newtheorem{dfn}[prop]{Definition}
\theoremstyle{definition}
\newtheorem{example}[prop]{Example}
\newtheorem{remark}[prop]{Remark}
\begin{document}
\vspace*{0mm}
\setlength{\itemsep}{-2\jot}
%\fontsize{13}{\baselineskip}\selectfont
\setlength{\parskip}{0.4\baselineskip}
\title[Graded symmetric cellular algebras]{Notes on graded symmetric cellular algebras}
\author{Yanbo Li and Deke Zhao}

\address{Li: School of Mathematics and Statistics, Northeastern University at Qinhuangdao, Qinhuangdao, 066004, China}

\email{liyanbo707@163.com}

\address{Zhao: School of Applied Mathematics, Beijing Normal
University at Zhuhai, Zhuhai, 519087, China}

\email{deke@amss.ac.cn}

\subjclass[2010]{16W50, 16G30, 16U70}

\keywords{graded cellular algebra, symmetric algebra, Higman ideal, centralizer}

\begin{abstract}Let $A=\oplus_{i\in \mathbb{Z}}A_i$ be a finite dimensional graded symmetric cellular algebra with a homogeneous symmetrizing trace of degree $d$. We prove that  $A_{-d}$ contains the Higman ideal $H(A)$ of the center of $A$ and $\dim H(A)\leq \dim A_{0}$ if $d\neq 0$, and provide a semisimplicity criterion of $A$ in terms of  the centralizer of $A_0$, which is a graded version of \cite[Theorem 3.2]{L}.
\end{abstract}

\thanks{The first named author is supported by the Natural Science Foundation of Hebei
Province, China (A2017501003) and the Science and Technology support
program of Northeastern University at Qinhuangdao (No. XNK201601).
The second named author is supported partly by NSFC 11571341, 11671234.}
\maketitle

\section{Introduction}
Cellular algebras were introduced by Graham and Lehrer \cite{GL} in 1996 motivated by the work of Kazhdan and Lusztig \cite{KL}. It provides a
systematic framework for studying the representation theory of many interesting and important algebras coming from mathematics and physics, such as Schur algebras, Temperley-Lieb algebras, Brauer algebras \cite{GL}, partition algebras \cite{Xi1}, Birman-Wenzl algebras \cite{Xi2},  Hecke algebras of finite types \cite{G}, and so on.

The ($\mathbb{Z}$-)gradings is a subtle structure on a finite dimensional algebra, which plays an important role in Lie theory and the representation theory (ref. \cite{BGS, CPS, F-Zimmer, R}).
Motivated by the works of Brundan, Kleshchev (and Wang) \cite{BK, BK2, BKW}, Hu and Mathas \cite{HM} introduced graded cellular algebras, which include  the Khovanov
diagram algebras and their quasi-hereditary covers \cite{BS,BS2}, the
level two degenerate cyclotomic Hecke algebras \cite{AST}, graded walled Brauer algebras \cite{BS3}, Temperley-Lieb algebras of type $A$ and $B$ \cite{PR}, etc (see references in \cite{HM2, HM3}).

The Auslander-Reiten conjecture \cite{ARS} claims that if the stable categories of two Artin algebras are equivalent then they have the same number of non-projective simple modules up to isomorphism. Recently, Liu, Zhou and Zimmermann's work \cite{LZZ} indicates that the projective center is the main obstruction to attack the Auslander-Reiten conjecture, which is exactly the Higman ideal for a symmetric algebra.  It is natural and interesting to investigate the Higman ideal of the center of a symmetric algebra.

The aim of this note is to study the Higman ideal of a graded symmetric cellular algebra by applying the dual basis method which has been used in \cite{L, L1, L2, L3, LX, LZ}. More precisely,
assume that $A=\oplus_{i\in \mathbb{Z}}A_i$ is a finite dimensional graded symmetric cellular algebra over a field $K$ with a homogeneous symmetrizing trace $\tau$ of degree $d$. Our first main result claims that the Higman ideal of the center $\mathcal{Z}(A)$ of $A$ is
contained in $A_{-d}$, and
$\dim H(A)\leq \dim A_{0}$ whenever $d\neq 0$  (see Theorem~\ref{3.4}). Secondly, we provide a  semisimplicity criterion for finite dimensional graded symmetric cellular algebras in terms  of the centralizer of $A_0$ (see Theorem~\ref{Them:semisimple}), which is a graded version of \cite[Theorem 3.2]{L}.

The note is organized as follows. In Section 2, we  review quickly some known results on symmetric algebras, graded algebras and cellular algebras. In Section~3, we study the Higman ideal of finite dimensional graded symmetric cellular algebras and prove the first main result.  Section~4 devotes  to investigate the centralizer of $A_0$ and gives a graded version of \cite[Thoerem 3.2]{L}.  The semisimplicity criterion for finite dimensional graded symmetric cellular algebras is provided in the last section.

\section{Preliminaries}

In this section, we recall some facts on symmetric algebras,
graded algebras and cellular algebras, and introduce the graded symmetric algebras.

\subsection{Symmetric algebras}
Let $K$ be a field and let $A$ be a finite dimensional $K$-algebra.
Recall that a bilinear form $f: A\times A\rightarrow K$ is {\it
non-degenerate} if the determinant of the matrix $(f(x_{i},\,x_{j}))_{x_{i},\,\, x_{j}\in \mathscr{B}}$ is
invertible for some basis $\mathscr{B}=\{x_1, \cdots, x_n\}$ of $A$ and  $f$ is {\it
associative} if $f(ab,c)=f(a,bc)$ for all $a, b, c\in A$.  We say
that $A$ is a {\em symmetric algebra} if there is a
non-degenerate associative bilinear form $f$ on $A$ such that
$f(x,y)=f(y,x)$ for all $x, y\in A$. Note that if $A$ is a symmetric algebra, then we can
define a linear map $\tau$ by
  \begin{equation*}\tau: A\rightarrow K\quad\quad a\mapsto
f(a,1),\end{equation*} which is  called the {\it symmetrizing trace} of $A$ induced by $f$.

Now let $A$ be a finite dimensional symmetric algebra with a basis $\mathscr{B}=\{x_1, \cdots, x_n\}$ and denote by $\mathscr{D}=\{y_1, \cdots, y_n\}$ the {\em dual basis} of $\mathscr{B}$, that is, $\mathscr{D}$ is a basis of $A$ satisfying $\tau(x_iy_j)=\delta_{ij}$ for all $i, j=1,\ldots,n$. Then the {\em Higman ideal} $H(A)$ of the center $\mathcal{Z}(A)$ of $A$ is
$$H(A):=\biggl\{\sum_ix_iay_i\biggl| \,a\in A\biggr.\biggr\},$$
which is independent of the choice of $\tau$ and the basis of $A$.

For arbitrary $1\leq i,j \leq n$, write
$x_{i}x_{j}=\sum_{k}r_{ijk}x_{k},$ where $r_{ijk}\in K$. The first named author proved the following lemma.

\begin{lem}[\protect{\cite[Lemma 2.2]{L2}}]\label{2.0} Let $A$ be a symmetric $K$-algebra. Then the following hold:
$$x_{i}y_{j}=\sum_{k}r_{kij}y_{k};\,\,\,\,\,y_{i}x_{j}=\sum_{k}r_{jki}y_{k}.$$
\end{lem}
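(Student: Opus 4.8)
The plan is to extract the coefficients of the two products by pairing them against the basis $\mathscr{B}$ under the trace $\tau$ and exploiting that $\mathscr{D}$ is dual to $\mathscr{B}$. The starting observation is that for any $a\in A$, writing $a=\sum_k c_k y_k$ in the dual basis and pairing with $x_m$ gives $\tau(x_m a)=\sum_k c_k\tau(x_m y_k)=c_m$ by the defining relation $\tau(x_i y_j)=\delta_{ij}$; hence every element admits the expansion
\[
a=\sum_k \tau(x_k a)\,y_k .
\]
So it suffices to compute $\tau(x_k\cdot x_iy_j)$ and $\tau(x_k\cdot y_ix_j)$ for each $k$.

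For the first identity I would apply this to $a=x_iy_j$. Associativity of the defining form (equivalently $\tau((uv)w)=\tau(u(vw))$) lets me regroup $\tau(x_k x_i y_j)=\tau((x_k x_i)y_j)$, and substituting the structure constants $x_k x_i=\sum_\ell r_{ki\ell}x_\ell$ turns this into $\sum_\ell r_{ki\ell}\tau(x_\ell y_j)=r_{kij}$, again using $\tau(x_\ell y_j)=\delta_{\ell j}$. Feeding $c_k=r_{kij}$ back into the expansion yields $x_iy_j=\sum_k r_{kij}y_k$.

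For the second identity I would apply the same expansion to $a=y_i x_j$ and compute $c_k=\tau(x_k y_i x_j)$. Here the extra ingredient is the cyclic symmetry $\tau(uv)=\tau(vu)$, which holds since $\tau(uv)=f(uv,1)=f(u,v)=f(v,u)=\tau(vu)$ by associativity and symmetry of $f$; this lets me rewrite $\tau(x_k y_i x_j)=\tau(x_j x_k y_i)$. Expanding $x_j x_k=\sum_\ell r_{jk\ell}x_\ell$ and using duality gives $c_k=\sum_\ell r_{jk\ell}\tau(x_\ell y_i)=r_{jki}$, whence $y_ix_j=\sum_k r_{jki}y_k$.

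There is no deep obstruction here; the whole computation is routine once the expansion $a=\sum_k\tau(x_k a)y_k$ is in place. The only points demanding care are keeping the order of the two factors straight, so that the correct index of $r$ is matched against the Kronecker delta, and, in the second identity, invoking the cyclic property $\tau(uv)=\tau(vu)$ rather than plain associativity, since $y_i$ sits between the two basis vectors and must be moved past $x_j$.
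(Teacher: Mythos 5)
Your proposal is correct. The paper itself does not prove this lemma but quotes it from \cite[Lemma 2.2]{L2}, and your argument --- expanding any element as $a=\sum_k\tau(x_ka)y_k$ against the dual basis, then evaluating the coefficients via associativity of $\tau$ for the first identity and the cyclic property $\tau(uv)=\tau(vu)$ for the second --- is precisely the standard computation used in that source, with the indices handled correctly.
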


\subsection{Graded symmetric algebras}

By a {\em graded space} we mean a
$\mathbb{Z}$-graded $K$-space $V$, namely a $K$-space with a
decomposition into subspaces $V=\bigoplus_{i\in \mathbb{Z}}V_i$. A
nonzero element $v$ of $V_i$ is said to be a {\em homogeneous element} of degree $i$ and
denoted by ${\rm deg}(v)=i$. We will view the field  $K$ as a
graded space concentrated in degree $0$.
Given two graded spaces $V$ and $W$, the $K$-space
$\mathrm{Hom}_K(V,W)$ of all $K$-linear maps from $V$ to $W$ is
a graded space with $\mathrm{Hom}_K(V,W)_i$ consisting
of all the $K$-linear maps $\alpha: V\rightarrow W$ such that
$\alpha(V_j)\subseteq W_{j+i}$ for all $i, j\in \mathbb{Z}$.
Nonzero element of $\mathrm{Hom}_K(V,W)_i$ will be called a
\textit{homogeneous map} of degree $i$.

By a \textit{graded algebra} $A$ we always mean a finite dimensional
$\mathbb{Z}$-graded associative $K$-algebra with identity, that
is, $A$ is a  graded space $A=\bigoplus_{i\in \mathbb{Z}}A_i$ such
that $A_iA_j\subseteq A_{i+j}$ for all $i, j\in \mathbb{Z}$.

\begin{dfn}\label{2.1}
A graded algebra $A$ is said to be a \textbf{graded  symmetric algebra} if
there is a homogeneous symmetrizing trace $\tau: A\rightarrow K$ of
degree $d$ for some $d\in \mathbb{Z}$.
\end{dfn}

\subsection{Cellular algebras}
Now we recall the definitions of cellular algebras, Gram matrices and cell modules.
\begin{dfn}[\protect{\cite[Definition~1.1]{GL}}]\label{2.2}
Let $R$ be a commutative ring with identity. An associative unital
$R$-algebra is called a \textbf{cellular algebra} with cell datum $(\Lambda, M, C, \ast)$ if the
following conditions are satisfied:

\begin{enumerate}
\item[(GC1)] The finite set $\Lambda$ is a poset. Associated with
each $\lam\in\Lambda$, there is a finite set $M(\lam)$. The
algebra $A$ has an $R$-basis $\{C_{S,T}^\lam \mid S,T\in
M(\lam),\lam\in\Lambda\}$.

\item[(GC2)] The map $\ast$ is an $R$-linear anti-automorphism of $A$
such that $(C_{S,T}^\lam)^{\ast}= C_{T,S}^\lam $ for
all $\lam\in\Lambda$ and $S,T\in M(\lam)$.

\item[(GC3)] If $\lam\in\Lambda$ and $S,T\in M(\lam)$, then for any
element
$a\in A$, we have
$$aC_{S,T}^\lam\equiv\sum_{S^{'}\in
M(\lam)}r_{a}(S',S)C_{S^{'},T}^{\lam} \,\,\,\,(\rm {mod}\,\,\,
A(<\lam)),$$ where $r_{a}(S^{'},S)\in R$ is independent of $T$ and
$A(<\lam)$ is the $R$-submodule of $A$ generated by
$\{C_{U,V}^\mu \mid U,\,\,V\in M(\mu),\mu<\lam\}$.
\end{enumerate}
\end{dfn}

Let $\lam\in\Lambda$. For arbitrary elements $S,T,U,V\in M(\lam)$,
Definition~\ref{2.2} implies that
$$C_{S,T}^\lam C_{U,V}^\lam \equiv\Phi(T,U)C_{S,V}^\lam\,\,\,\, (\rm mod\,\,\, A(<\lam)),$$
where $\Phi(T,U)\in R$ depends only on $T$ and $U$. It is easy to check that $\Phi(T,U)=\Phi(U,T)$ for arbitrary $T,U\in M(\lam)$.

For each $\lam\in \Lambda$, fix an order
on $M(\lam)$. The associated  {\em Gram matrix} $G(\lam)$ is the following symmetric matrix
$$G(\lam)=(\Phi(S, T))_{S,T\in M(\lam)}.$$
 Note that the determinant of $G(\lam)$ is independent of the choice of the order on
$M(\lam)$.

Given a cellular algebra $A$, we note that $A$ has a family of modules defined by its cellular structure.

\begin{dfn}[\protect{\cite[Definition 2.1]{GL}}]\label{2.3}
Let $A$ be a cellular algebra with cell datum $(\Lambda, M, C,
\ast)$. For each $\lam\in\Lambda$, the \textit{cell module} $W(\lam)$ is a free $R$-module with basis
$\{C_{S}\mid S\in M(\lam)\}$ and the left $A$-action
defined by
$$aC_{S}=\sum_{S^{'}\in M(\lam)}r_{a}(S^{'},S)C_{S^{'}}
\,\,\,\,(a\in A,\,\,S\in M(\lam)),$$ where $r_{a}(S^{'},S)$ is the
element of $R$ defined in Definition~\ref{2.2}(GC3).
\end{dfn}

\subsection{Symmetric cellular algebras}

Let $A$ be a symmetric
cellular algebra with cell datum $(\Lambda, M, C, \ast)$. Fix a symmetrizing trace $\tau$ and denote the dual basis by  $$D=\{D_{S,T}^\lam
\mid S,T\in M(\lam),\lam\in\Lambda\},$$ which is determined by
$$\tau(C_{S,T}^{\lam}D_{U,V}^{\mu})=\delta_{\lam\mu}\delta_{SV}\delta_{TU}.$$

Set $e_{\lam}=\sum_{S\in M(\lam)}C_{S,S}^{\lam}D_{S,S}^{\lam}$. The first named author \cite{L} introduced the following ideal $L(A)$ of $\mathcal{Z}(A)$
\begin{eqnarray*}
% \nonumber to remove numbering (before each equation)
L(A)&:=& \biggl\{\sum_{\lam\in\Lambda} r_{\lam}e_{\lam}\biggl|\, r_{\lam}\in K\biggr.\biggr\},
\end{eqnarray*}
and proved that $H(A)\subseteq L(A)$.

For any $\lam, \mu\in \Lambda$, $S,T\in M(\lam)$, $U,V\in M(\mu)$,
write
$$C_{S,T}^{\lam}C_{U,V}^{\mu}=\sum\limits_{\epsilon\in\Lambda,X,Y\in M(\epsilon)}
r_{(S,T,\lam),(U,V,\mu),(X,Y,\epsilon)}C_{X,Y}^{\epsilon}.$$
The following lemma is important to our purpose.

\begin{lem}[\protect{\cite[Lemma 3.1]{L2}}]\label{2.4}
Let $A$ be a symmetric cellular algebra with a cell datum $(\Lambda,
M, C, \ast)$ and $\tau$ a given symmetrizing trace. For arbitrary $\lam,\mu\in\Lambda$ and $S,T,P,Q\in M(\lam)$, $U,V\in M(\mu)$, the following hold:
\begin{enumerate}
\item[(1)]$D_{U,V}^{\mu}C_{S,T}^{\lam}=\sum\limits_{\epsilon\in
\Lambda, X,Y\in
M(\epsilon)}r_{(S,T,\lam),(Y,X,\epsilon),(V,U,\mu)}D_{X,Y}^{\epsilon}.$
\item[(2)]$C_{S,T}^{\lam}D_{U,V}^{\mu}=\sum\limits_{\epsilon\in
\Lambda, X,Y\in
M(\epsilon)}r_{(Y,X,\epsilon),(S,T,\lam),(V,U,\mu)}D_{X,Y}^{\epsilon}.$
\item[(3)] $C_{S,T}^{\lam}D_{T,Q}^{\lam}=C_{S,P}^{\lam}D_{P,Q}^{\lam}.$
\item[(4)]$D_{T,S}^{\lam}C_{S,Q}^{\lam}=D_{T,P}^{\lam}C_{P,Q}^{\lam}.$
\item[(5)]$C_{S,T}^{\lam}D_{P,Q}^{\lam}=0 \text{ if }  T\neq P.$
\item[(6)]$D_{P,Q}^{\lam}C_{S,T}^{\lam}=0 \text{ if } Q\neq S.$
\item[(7)]$C_{S,T}^{\lam}D_{U,V}^{\mu}=0 \text{ if } \mu\nleq \lam.$
\item[(8)]$D_{U,V}^{\mu}C_{S,T}^{\lam}=0 \text{ if }  \mu\nleq\lam.$
\end{enumerate}
\end{lem}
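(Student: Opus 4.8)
The plan is to treat parts (1) and (2) as the engine of the lemma and to read off parts (3)--(8) from them together with the cellular axiom (GC3). The first thing I would pin down is the precise relation between the cellular basis $\{C_{S,T}^\lam\}$ and its dual $\{D_{S,T}^\lam\}$: since $\tau(C_{S,T}^\lam D_{U,V}^\mu)=\delta_{\lam\mu}\delta_{SV}\delta_{TU}$, the dual basis vector of $C_{S,T}^\lam$ is $D_{T,S}^\lam$, so that in the notation of Lemma~\ref{2.0} one has $x_{(S,T,\lam)}=C_{S,T}^\lam$ and $y_{(S,T,\lam)}=D_{T,S}^\lam$, equivalently $D_{U,V}^\mu=y_{(V,U,\mu)}$. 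This index transpose is the source of the reversed triples appearing in the statement, so fixing it at the outset is essential.

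With this dictionary, parts (1) and (2) follow directly from Lemma~\ref{2.0}. For (2) I would write $C_{S,T}^\lam D_{U,V}^\mu=x_{(S,T,\lam)}\,y_{(V,U,\mu)}$ and apply the first identity $x_iy_j=\sum_k r_{kij}y_k$; the summation index $k=(X',Y',\epsilon)$ contributes $y_k=D_{Y',X'}^\epsilon$ with coefficient $r_{(X',Y',\epsilon),(S,T,\lam),(V,U,\mu)}$, and relabelling $X'=Y$, $Y'=X$ gives exactly (2). Part (1) is the mirror computation using $y_ix_j=\sum_k r_{jki}y_k$. There is no real difficulty here beyond the bookkeeping.

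For the remaining parts I would observe that each structure constant appearing in (1) and (2) is, by definition, the coefficient of some $C_{V,U}^\mu$ in a product $C_{Y,X}^\epsilon C_{S,T}^\lam$ (for (2)) or $C_{S,T}^\lam C_{Y,X}^\epsilon$ (for (1)), and then invoke (GC3). Reading (GC3) with right factor $C_{S,T}^\lam$ gives $C_{Y,X}^\epsilon C_{S,T}^\lam\equiv\sum_{S'}r(S',S)C_{S',T}^\lam\pmod{A(<\lam)}$, whose three features are independent consequences: the coefficient $r(S',S)$ is independent of $T$, which makes the right-hand side of (2) independent of $T$ and yields (3); the cell-$\lam$ part involves only elements $C_{S',T}^\lam$, so the coefficient of $C_{Q,P}^\lam$ vanishes unless $P=T$, yielding (5); and the whole product lies in the span of cellular basis elements indexed by cells $\leq\lam$, so the coefficient of any $C_{V,U}^\mu$ with $\mu\nleq\lam$ vanishes, yielding (7). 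Converting right multiplication by $C_{S,T}^\lam$ into left multiplication through the anti-automorphism $\ast$ produces the transposed form of (GC3), in which the cell-$\lam$ part keeps the first index and its coefficient is independent of that index; feeding this into (1) gives (4), (6) and (8) by identical reasoning.

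I expect the only genuine obstacle to be notational discipline: the dual-basis transpose $y_{(S,T,\lam)}=D_{T,S}^\lam$, the left-versus-right forms of (GC3), and the repeated reuse of a single structure constant must all be aligned so that each index lands in its correct slot. Once the dictionary is fixed and the $\ast$-transposed form of (GC3) is recorded, each of (3)--(8) is a one-line deduction, and the entire mathematical content is carried by Lemma~\ref{2.0} and axiom (GC3).
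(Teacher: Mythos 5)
Your proposal is correct; note that the paper itself gives no proof of this statement, quoting it verbatim from \cite[Lemma 3.1]{L2}. Your dictionary $y_{(S,T,\lam)}=D_{T,S}^{\lam}$, the derivation of (1)--(2) from Lemma~\ref{2.0}, and the extraction of (3)--(8) by reading the structure constants $r_{(Y,X,\epsilon),(S,T,\lam),(V,U,\mu)}$ as coefficients in products of cellular basis elements and invoking (GC3) together with its $\ast$-transposed form is exactly the argument of the cited source, so there is nothing to add.
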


Denote by $G'(\lam)$ the Gram matrices
defined by the dual basis. The first named author \cite[Lemma~3.6]{L2} showed $G(\lam)G'(\lam)=k_{\lam}E$ for some $k_{\lam}\in K$, where $E$ is the identity matrix.

The following facts on the constants $k_{\lam}$, $\lam\in \Lambda$ will be used later.

\begin{lem}[\protect{\cite[Lemma 3.3]{L2}}]\label{2.6}
Let $A$ be a symmetric cellular algebra. Then $(C_{S,S}^{\lam}D_{S,S}^{\lam})^2=k_{\lam}C_{S,S}^{\lam}D_{S,S}^{\lam}$ for arbitrary $\lam\in\Lambda$ and $S\in M(\lam)$.
\end{lem}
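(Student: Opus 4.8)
The plan is to reduce the quasi-idempotent relation to the identity $G(\lam)G'(\lam)=k_\lam E$ of \cite[Lemma~3.6]{L2}, by evaluating the fourfold product
\[
\bigl(C_{S,S}^\lam D_{S,S}^\lam\bigr)^2=C_{S,S}^\lam\,\bigl(D_{S,S}^\lam C_{S,S}^\lam\bigr)\,D_{S,S}^\lam
\]
one factor at a time. The guiding idea is to always keep the outermost $C_{S,S}^\lam$ in reserve: at each stage the expansion of an inner product in the dual basis produces a ``principal'' part together with a correction supported on indices $\mu\nleq\lam$, and by Lemma~\ref{2.4}(7) (which gives $C_{S,S}^\lam D_{U,V}^\mu=0$ whenever $\mu\nleq\lam$) this outer $C_{S,S}^\lam$ annihilates every such correction. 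This is why the two wrong-order products $D_{S,S}^\lam C_{S,S}^\lam D_{S,S}^\lam$ and $C_{S,S}^\lam D_{S,S}^\lam C_{S,S}^\lam$ need not themselves be scalar multiples of $D_{S,S}^\lam$ or $C_{S,S}^\lam$, yet the full product collapses.

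First I would expand $D_{S,S}^\lam C_{S,S}^\lam$ in the dual basis via Lemma~\ref{2.4}(1): the coefficient of $D_{X,Y}^\epsilon$ equals the coefficient of $C_{S,S}^\lam$ in $C_{S,S}^\lam C_{Y,X}^\epsilon$. Since the span of $\{C_{U,V}^\mu:\mu\le\epsilon\}$ is a two-sided ideal (a standard consequence of (GC3) and the anti-automorphism $\ast$), the product $C_{S,S}^\lam C_{Y,X}^\epsilon$ lies in that ideal, so this coefficient vanishes whenever $\lam\nleq\epsilon$, that is, for $\epsilon<\lam$ and for $\epsilon$ incomparable to $\lam$. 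For $\epsilon=\lam$ the relation $C_{S,S}^\lam C_{Y,X}^\lam\equiv\Phi(S,Y)C_{S,X}^\lam\pmod{A(<\lam)}$ shows the coefficient is $\Phi(S,Y)\delta_{XS}$. Hence
\[
D_{S,S}^\lam C_{S,S}^\lam=\sum_{Y\in M(\lam)}\Phi(S,Y)\,D_{S,Y}^\lam+h,\qquad h\in\mathrm{span}\{D_{U,V}^\mu:\mu\nleq\lam\}.
\]

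Multiplying on the left by $C_{S,S}^\lam$ kills $h$ by Lemma~\ref{2.4}(7), leaving $\bigl(C_{S,S}^\lam D_{S,S}^\lam\bigr)^2=\sum_{Y}\Phi(S,Y)\,C_{S,S}^\lam D_{S,Y}^\lam D_{S,S}^\lam$. To finish I would use the Gram matrix $G'(\lam)=(\Phi'(S,T))$ of the dual basis: exactly as for the $C$'s, one has $D_{S,Y}^\lam D_{S,S}^\lam\equiv\Phi'(Y,S)D_{S,S}^\lam$ modulo $\mathrm{span}\{D_{U,V}^\mu:\mu\nleq\lam\}$, and once more the outer $C_{S,S}^\lam$ annihilates the correction through Lemma~\ref{2.4}(7). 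Therefore
\[
\bigl(C_{S,S}^\lam D_{S,S}^\lam\bigr)^2=\Bigl(\sum_{Y\in M(\lam)}\Phi(S,Y)\Phi'(Y,S)\Bigr)C_{S,S}^\lam D_{S,S}^\lam,
\]
and the bracketed scalar is precisely the $(S,S)$-entry of $G(\lam)G'(\lam)=k_\lam E$, namely $k_\lam$, which gives the claim.

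The genuinely routine parts are the two dual-basis expansions; the main obstacle is the order bookkeeping that confines both correction terms to $\mathrm{span}\{D_{U,V}^\mu:\mu\nleq\lam\}$, since this is exactly the hypothesis under which Lemma~\ref{2.4}(7) applies, whereas a stray term with $\mu\le\lam$ would survive. For the first expansion this is secured by the ideal argument above; for the second it rests on identifying the precise filtration modulo which $G'(\lam)$ is defined, and on the fact that the duality between $\{C_{S,T}^\lam\}$ and $\{D_{S,T}^\lam\}$ reverses the order on $\Lambda$, so that the correction indices are forced to satisfy $\mu>\lam$ rather than $\mu<\lam$.
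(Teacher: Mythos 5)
The paper does not prove this lemma; it is imported verbatim from \cite[Lemma 3.3]{L2}, so there is no in-text proof to compare against. Your argument is correct and self-contained: the expansion of $D_{S,S}^{\lam}C_{S,S}^{\lam}$ via Lemma~\ref{2.4}(1), the observation that the structure constant $r_{(S,S,\lam),(Y,X,\epsilon),(S,S,\lam)}$ vanishes unless $\lam\leq\epsilon$ (so the corrections live in $\mathrm{span}\{D_{U,V}^{\mu}:\mu>\lam\}$ and are killed by the outer $C_{S,S}^{\lam}$ through Lemma~\ref{2.4}(7)), and the analogous expansion of $D_{S,Y}^{\lam}D_{S,S}^{\lam}$ all check out, and the resulting scalar $\sum_{Y}\Phi(S,Y)\Phi'(Y,S)$ is indeed the $(S,S)$-entry of $G(\lam)G'(\lam)=k_{\lam}E$. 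This is the same dual-basis machinery that \cite{L2} uses (its Lemmas 3.2 and 3.6 are exactly Lemma~\ref{2.4} and the identity $G(\lam)G'(\lam)=k_{\lam}E$ here), so your route is in the spirit of the original; the only point requiring care, which you handle correctly, is that the $\epsilon=\lam$ part of $D_{S,Y}^{\lam}D_{S,S}^{\lam}$ collapses to the single term $\Phi'(Y,S)D_{S,S}^{\lam}$ (via parts (5) and (6) of Lemma~\ref{2.4}) so that $\Phi'$ really is the Gram matrix of the dual basis in the convention for which $G(\lam)G'(\lam)=k_{\lam}E$ holds.
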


\begin{lem}[\protect{\cite[Theorem 4.4]{LX}}]\label{2.5}
Let $A$ be a symmetric cellular algebra with a
cell datum $(\Lambda, M, C, \ast)$. Then for any $\lam\in\Lambda$,
the cell module $W(\lam)$ is projective if and only if $k_\lam\neq
0$.
\end{lem}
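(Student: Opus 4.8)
The plan is to fix an index $S\in M(\lam)$ and to study the left $A$-module homomorphism
$$\theta_S\colon W(\lam)\longrightarrow A,\qquad C_P\longmapsto C_{P,S}^{\lam}D_{S,S}^{\lam}\quad(P\in M(\lam)),$$
comparing it with the element $\varepsilon_S=k_\lam^{-1}C_{S,S}^{\lam}D_{S,S}^{\lam}$ furnished by Lemma~\ref{2.6}. That $\theta_S$ is a homomorphism is the first thing to check: for $a\in A$, condition (GC3) gives $aC_{P,S}^{\lam}=\sum_{P'}r_a(P',P)C_{P',S}^{\lam}+z$ with $z\in A(<\lam)$, and $zD_{S,S}^{\lam}=0$ because every cell label $\mu$ occurring in $z$ satisfies $\mu<\lam$, so $\lam\nleq\mu$ and Lemma~\ref{2.4}(7) applies; hence $\theta_S(aC_P)=a\,\theta_S(C_P)$. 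Writing $f_S=C_{S,S}^{\lam}D_{S,S}^{\lam}=\theta_S(C_S)$, the same reasoning shows $Af_S=\theta_S(AC_S)$.

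For the implication $k_\lam\neq0\Rightarrow W(\lam)$ projective I would proceed in three steps. First, Lemma~\ref{2.6} makes $\varepsilon_S$ an idempotent, so $A\varepsilon_S=Af_S$ is a projective left ideal. Second, $\theta_S$ is injective: if $\sum_P c_PC_{P,S}^{\lam}D_{S,S}^{\lam}=0$, then applying $\tau(C_{S,V}^{\lam}\,\cdot\,)$ for each $V$ and using $C_{S,V}^{\lam}C_{P,S}^{\lam}\equiv\Phi(V,P)C_{S,S}^{\lam}\pmod{A(<\lam)}$, Lemma~\ref{2.4}(7) (to kill the lower terms after right multiplication by $D_{S,S}^{\lam}$) and $\tau(C_{S,S}^{\lam}D_{S,S}^{\lam})=1$, one obtains $\sum_P\Phi(V,P)c_P=0$ for all $V$, i.e. $G(\lam)\mathbf c=0$; since $G(\lam)G'(\lam)=k_\lam E$ with $k_\lam\neq0$ the matrix $G(\lam)$ is invertible, so $\mathbf c=0$. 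Third, the same invertibility forces every column of $G(\lam)$ to be nonzero, so for each $P$ there is $T$ with $\Phi(T,S)\neq0$, and then $C_{P,T}^{\lam}\cdot C_S=\Phi(T,S)C_P$ shows $AC_S=W(\lam)$. Combining the three steps, $A\varepsilon_S=Af_S=\theta_S(AC_S)=\theta_S(W(\lam))\cong W(\lam)$, whence $W(\lam)$ is projective.

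The reverse implication is the delicate one, and I would prove it by contraposition, assuming $k_\lam=0$ and showing $W(\lam)$ is not projective. Staying within the dual-basis philosophy of the paper, the natural instrument is Higman's projectivity criterion for the symmetric algebra $A$ with dual bases $\{C_{U,V}^{\mu}\}$ and $\{D_{V,U}^{\mu}\}$: the module $W(\lam)$ is projective if and only if $\mathrm{id}_{W(\lam)}$ lies in the image of the trace map $\psi\mapsto\bigl(w\mapsto\sum_{\mu,U,V}C_{U,V}^{\mu}\,\psi(D_{V,U}^{\mu}w)\bigr)$ on $\mathrm{End}_K(W(\lam))$. By Lemma~\ref{2.4}(8) only labels $\mu\le\lam$ act nontrivially on $W(\lam)$, and the point of the computation is that, after discarding the strictly lower layers $\mu<\lam$, the contribution of the top layer $\mu=\lam$ expresses this operator through the Gram matrix $G(\lam)$; since $G(\lam)G'(\lam)=k_\lam E=0$ it then cannot equal the identity. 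Alternatively one can argue structurally: were $W(\lam)$ projective, then as $A$ is symmetric it would be self-injective with socle equal to its simple top, which together with the Graham--Lehrer analysis of the contravariant form $G(\lam)$ forces $W(\lam)$ to be irreducible and hence $\det G(\lam)\neq0$.

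I expect the main obstacle to be precisely this converse: controlling the contributions of the strictly lower cells $\mu<\lam$ to the Higman trace on $W(\lam)$ and extracting the clean dependence on $k_\lam$ — this requires the bookkeeping of the mixed products $D^{\mu}C^{\lam}$ supplied by Lemma~\ref{2.4}, and (in the structural variant) the further input that $\det G(\lam)\neq0$ is equivalent to $k_\lam\neq0$, which must be read off from $G(\lam)G'(\lam)=k_\lam E$ together with the nonsingularity of $G'(\lam)$. The forward direction, by contrast, is a transparent dual-basis computation with no such subtlety.
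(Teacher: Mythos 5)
First, a point of reference: the paper does not prove this lemma at all --- it is imported verbatim from \cite[Theorem 4.4]{LX} --- so there is no internal proof to compare against, and your proposal has to stand on its own. The direction $k_\lam\neq 0\Rightarrow W(\lam)$ projective is correct and complete as you have written it: $\theta_S$ is a well-defined $A$-homomorphism because the lower-order terms produced by (GC3) are killed by right multiplication by $D_{S,S}^{\lam}$ via Lemma~\ref{2.4}(7); injectivity reduces to invertibility of $G(\lam)$, which follows from $G(\lam)G'(\lam)=k_\lam E$; and $AC_S=W(\lam)$ together with the idempotent $k_\lam^{-1}C_{S,S}^{\lam}D_{S,S}^{\lam}$ of Lemma~\ref{2.6} exhibits $W(\lam)$ as a direct summand of ${}_AA$. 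This is exactly the dual-basis argument one expects for this half.

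The converse, however, is not proved --- it is only planned, and the step you defer is where the whole content of the theorem sits. In the Higman-criterion route you must compute the action of the dual basis elements $D_{V,U}^{\mu}$ on $W(\lam)$ in the cellular basis; Lemma~\ref{2.4}(1) expresses the products $D^{\mu}C^{\lam}$ in the $D$-basis, not the $C$-basis, and deciding which of those terms survive reduction modulo $A(<\lam)$ requires a change-of-basis analysis you have not supplied. The ``structural variant'' is not a fallback: it aims to conclude $\det G(\lam)\neq0$ and then pass to $k_\lam\neq0$, but that passage needs $\det G'(\lam)\neq0$, which cannot be ``read off'' from $G(\lam)G'(\lam)=k_\lam E$ when $k_\lam=0$. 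In fact $\det G(\lam)\neq0$ does \emph{not} imply $k_\lam\neq0$: take $A=K[x]/(x^2)$ with $\tau(1)=0$, $\tau(x)=1$, $\Lambda=\{1<2\}$, $C^{1}_{1,1}=x$, $C^{2}_{1,1}=1$; then $G(2)=(1)$ is invertible, yet $D^{2}_{1,1}=x$ gives $(C^{2}_{1,1}D^{2}_{1,1})^2=x^2=0$, so $k_{2}=0$ and $W(2)$ is the non-projective simple module. (The intermediate claim that projectivity of $W(\lam)$ over a symmetric algebra forces $W(\lam)$ to be irreducible is also unjustified --- cell modules need not even be indecomposable.) So the reverse implication is a genuine gap; it is precisely the part for which the paper leans on \cite{LX}, where it is established by a further dual-basis analysis of $\Hom_A(W(\lam),A)$ rather than by either of the two shortcuts you sketch.
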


\begin{lem}[\protect{\cite[Corollary 4.7]{L2}}]\label{2.8}
Let $A$ be a finite dimensional symmetric cellular algebra. Then the following statements are equivalent.
\begin{enumerate}
\item[(1)]$A$ is semisimple.
\item[(2)]$k_{\lam}\neq 0$ for all $\lam\in\Lambda$.
\item[(3)]$\{C_{S,T}^{\lam}D_{T,T}^{\lam}\mid\lam\in\Lambda, S,T\in M(\lam)\}$ is a basis of $A$.
\end{enumerate}
\end{lem}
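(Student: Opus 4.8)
The plan is to prove the cycle $(1)\Rightarrow(2)\Rightarrow(3)\Rightarrow(1)$, so that every individual implication is direct. The only substantial step identifies the basis condition $(3)$ with the non-vanishing of every Gram determinant $\det G(\lam)$; the remaining links then come for free from Lemma~\ref{2.5}, from the relation $G(\lam)G'(\lam)=k_{\lam}E$, and from the Graham--Lehrer semisimplicity criterion \cite{GL}, which says that a cellular algebra over a field is semisimple exactly when every $G(\lam)$ is non-singular.

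The core computation concerns the family $g_{S,T}^{\lam}:=C_{S,T}^{\lam}D_{T,T}^{\lam}$. It has exactly $\dim A=\sum_{\lam}|M(\lam)|^{2}$ members, so it is a basis if and only if it is linearly independent; and since $\tau$ is non-degenerate with $\{C_{V,U}^{\mu}\}$ a basis, independence is equivalent to invertibility of the square matrix $P=(\tau(g_{S,T}^{\lam}C_{V,U}^{\mu}))$. First I would use that $\tau(x\,D_{T,T}^{\lam})$ is the coefficient of $C_{T,T}^{\lam}$ in the cellular expansion of $x$, together with $\tau(ab)=\tau(ba)$, to rewrite each entry of $P$ as the coefficient of $C_{T,T}^{\lam}$ in the product $C_{V,U}^{\mu}C_{S,T}^{\lam}$. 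By (GC3), the fact that $A(<\lam)$ is a two-sided ideal annihilating $W(\lam)$, and that $\ast$ preserves cell levels, this coefficient vanishes unless $\mu\geq\lam$, equals $\delta_{VT}\,\Phi(U,S)$ when $\mu=\lam$, and is otherwise left unconstrained. Hence, ordering $\Lambda$ by any linear refinement of its partial order makes $P$ block triangular with $\lam$-diagonal block consisting of $|M(\lam)|$ copies of $G(\lam)$, so that $\det P=\pm\prod_{\lam\in\Lambda}(\det G(\lam))^{|M(\lam)|}$. Therefore $(3)$ holds if and only if $\det G(\lam)\neq0$ for every $\lam\in\Lambda$.

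It remains to close the cycle. For $(1)\Rightarrow(2)$, semisimplicity makes every cell module $W(\lam)$ projective, so $k_{\lam}\neq0$ for all $\lam$ by Lemma~\ref{2.5}. For $(2)\Rightarrow(3)$, the relation $G(\lam)G'(\lam)=k_{\lam}E$ shows that $k_{\lam}\neq0$ forces $G(\lam)$ to be invertible, whence all $\det G(\lam)\neq0$, which is $(3)$ by the previous paragraph. Finally $(3)\Rightarrow(1)$ follows because $(3)$ gives all $\det G(\lam)\neq0$, so $A$ is semisimple by the Graham--Lehrer criterion. The main obstacle is the bookkeeping inside the block-triangularity step: one must check carefully, from (GC3) and the ideal structure of $A(<\lam)$, that the entries with $\mu\not\geq\lam$ genuinely vanish and that the $\mu=\lam$ block is exactly $G(\lam)$, while confirming that the uncontrolled $\mu>\lam$ entries sit strictly off the diagonal and so cannot enter $\det P$. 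Routing the argument through $\det G(\lam)$ rather than through $k_{\lam}$ is deliberate, since it lets the cycle sidestep the delicate implication $\det G(\lam)\neq0\Rightarrow k_{\lam}\neq0$ in isolation.
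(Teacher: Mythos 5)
The paper does not actually prove this lemma; it is imported wholesale from \cite[Corollary 4.7]{L2}, so there is no internal proof to measure you against. Judged on its own, your argument is correct and essentially self-contained modulo the two facts the paper also quotes (Lemma~\ref{2.5} and $G(\lam)G'(\lam)=k_{\lam}E$) together with the Graham--Lehrer semisimplicity criterion. The pivotal step --- that $\{C_{S,T}^{\lam}D_{T,T}^{\lam}\}$ is a basis if and only if $\det G(\lam)\neq 0$ for every $\lam$ --- checks out: by the defining property of the dual basis and $\tau(ab)=\tau(ba)$, the entry $\tau(C_{S,T}^{\lam}D_{T,T}^{\lam}C_{V,U}^{\mu})$ is the coefficient of $C_{T,T}^{\lam}$ in $C_{V,U}^{\mu}C_{S,T}^{\lam}$; this vanishes for $\mu\not\geq\lam$ because the product lies in the two-sided ideal $A(\leq\mu)$ (this is the cleanest justification for the vanishing, a bit sharper than your appeal to $A(<\lam)$ annihilating $W(\lam)$), and for $\mu=\lam$ it equals $\delta_{VT}\Phi(U,S)$, so the diagonal blocks are $|M(\lam)|$ copies of the symmetric matrix $G(\lam)$ and $\det P=\pm\prod_{\lam}(\det G(\lam))^{|M(\lam)|}$. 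The cycle $(1)\Rightarrow(2)\Rightarrow(3)\Rightarrow(1)$ then closes exactly as you describe, and routing everything through $\det G(\lam)$ is a sensible choice since it lets the Graham--Lehrer criterion absorb the implication back to semisimplicity and avoids proving $\det G(\lam)\neq 0\Rightarrow k_{\lam}\neq 0$ in isolation. The source \cite{L2} obtains the equivalence through its analysis of the radical of a symmetric cellular algebra, so your determinant/block-triangularity computation should be regarded as an independent and rather transparent alternative derivation; what it buys is that everything is reduced to one explicit Gram-type determinant, at the cost of invoking the external Graham--Lehrer theorem rather than staying entirely inside the dual-basis formalism.
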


\section{Homogeneous symmetrizing trace and Higman ideal}

Let $A$ be a cellular algebra. Following Hu and Mathas \cite{HM},  we say that $A$ is a {\em  graded cellular algebra} if $A$ is in
addition a $\mathbb{Z}$-graded algebra satisfying the following
condition:

\begin{enumerate}
\item[(GC$_d$)] Let ${\rm deg}:
\coprod_{\lam\in\Lambda}M(\lam)\rightarrow \mathbb{Z}$ be a
function. Each basis element $C_{S,T}^\lam$ is homogeneous of
degree ${\rm deg}(C_{S,T}^\lam )= {\rm deg}(S)+{\rm deg}(T)$.
\end{enumerate}

A graded cellular algebra $A$ is called a {\em graded symmetric cellular algebra} if $A$
is in addition a graded symmetric algebra.  Note that a finite
dimensional semi-simple algebra is a graded symmetric cellular algebra with a non-trivial
grading (see Section 5 for details), which is a generalization of \cite[Example 2.2]{HM}.

The following is an example of non-semisimple graded symmetric cellular algebras.

\begin{example}\label{3.1}
Let $K$ be a field and $A=K\oplus Kx$ with ${\rm deg}(x)=2$ and $x^2=0$.
Then $A$ is a graded symmetric algebra with a non-degenerate trace
form $\tau(1)=0$ and $\tau(x)=1$. It is a homogeneous trace form
of degree $-2$. It is easy to check that the dual basis of $\{1,
x\}$ is $\{x, 1\}$. Thus the dual basis is homogeneous too.
\end{example}

The following easy verified fact is useful, which implies that the
dual basis of the homogeneous basis of graded symmetric
 algebras with respect to a homogeneous symmetrizing trace must be
homogeneous.

\begin{lem}\label{3.2}
Let $A$ be a graded symmetric algebra with homogeneous symmetrizing trace
$\tau$ of degree $d$ and let $\mathscr{B}=\{x_i\mid i\in 1,\cdots, n\}$ be a
homogeneous basis of $A$. Then the dual basis
$\mathscr{D}=\{y_i\mid  \tau(x_iy_j)=\delta_{ij},\,\, i, j\in 1,\cdots, n\}$ of $A$  is a homogeneous basis with
$${\rm deg}(x_i)+{\rm deg}(y_i)=-d.$$
\end{lem}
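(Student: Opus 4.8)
The plan is to exploit the fact that, because $K$ is concentrated in degree $0$, a homogeneous trace of degree $d$ must annihilate every homogeneous component of $A$ except the one sitting in degree $-d$. First I would record this observation precisely: since $\tau\in\mathrm{Hom}_K(A,K)_d$ we have $\tau(A_j)\subseteq K_{j+d}$, and $K_{j+d}=0$ unless $j=-d$; hence $\tau(a)=0$ for every homogeneous element $a$ whose degree is different from $-d$. Throughout I abbreviate $\deg(x_i)=d_i$.

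Next I would decompose each dual basis element into its homogeneous pieces, $y_j=\sum_m (y_j)_m$ with $(y_j)_m\in A_m$, and substitute this into the defining relation $\tau(x_iy_j)=\delta_{ij}$. Since $x_i(y_j)_m\in A_{d_i+m}$, the vanishing observation shows that only the summand with $d_i+m=-d$ survives, so $\tau(x_iy_j)=\tau\bigl(x_i(y_j)_{-d-d_i}\bigr)$. The goal is then to upgrade this relation to the assertion that $y_j$ itself is concentrated in the single degree $-d-d_j$.

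To that end I would set $z_j:=(y_j)_{-d-d_j}$, the homogeneous component of $y_j$ in degree $-d-d_j$, and verify that the family $\{z_j\}$ already satisfies the dual-basis relations $\tau(x_iz_j)=\delta_{ij}$. Here $x_iz_j\in A_{d_i-d_j-d}$, so $\tau(x_iz_j)=0$ unless $d_i=d_j$; when $d_i\neq d_j$ one has $i\neq j$ and both sides vanish, while when $d_i=d_j$ one has $z_j=(y_j)_{-d-d_i}$ and the computation of the previous paragraph gives $\tau(x_iz_j)=\tau(x_iy_j)=\delta_{ij}$. Consequently $\tau\bigl(x_i(y_j-z_j)\bigr)=0$ for all $i$; by linearity $\tau\bigl(a(y_j-z_j)\bigr)=0$ for every $a\in A$, and non-degeneracy of the associated form $f(a,b)=\tau(ab)$ forces $y_j=z_j$. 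This simultaneously shows that $y_j$ is homogeneous and that $\deg(y_j)=-d-d_j$, that is, $\deg(x_i)+\deg(y_i)=-d$.

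The individual calculations are all elementary degree bookkeeping, so the only point requiring a little care is the case split in the dual-basis verification for $\{z_j\}$: one must observe that distinct degrees $d_i\neq d_j$ force $i\neq j$, so that the Kronecker delta on the right-hand side vanishes in exactly the cases where $\tau(x_iz_j)$ does. Once that matching is in place, the conclusion is immediate from the uniqueness of the dual basis guaranteed by non-degeneracy of $f$.
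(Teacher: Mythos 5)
Your argument is correct and complete: the key observation that $\tau$ kills every homogeneous component except $A_{-d}$, followed by extracting the degree-$(-d-d_j)$ component $z_j$ of $y_j$, checking that $\{z_j\}$ also satisfies the dual-basis relations, and invoking non-degeneracy of $f(a,b)=\tau(ab)$ to conclude $y_j=z_j$, is exactly the standard verification. The paper states this lemma as an ``easily verified fact'' without proof, and your write-up supplies precisely the argument that is being left to the reader.
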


Combining Lemma \ref{3.2} with \cite[Theorem 2.5]{L1}, we get the following

\begin{cor}\label{3.3}
Let $A$ be a graded symmetric cellular algebra with a homogeneous symmetrizing trace $\tau$ of degree $d$.
Then the dual basis is a graded cellular basis if and only if $d$ is even and $\tau(a)=\tau(a^{\ast})$ for all $a\in A$.
\end{cor}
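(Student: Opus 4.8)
The plan is to separate the two defining features of a graded cellular basis: the \emph{cellular} axioms (GC1)--(GC3) and the \emph{grading} axiom (GC$_d$). For the cellular part I would invoke \cite[Theorem 2.5]{L1}, which asserts that the dual basis $\{D_{S,T}^{\lam}\}$ satisfies (GC1)--(GC3) (with respect to an appropriate order on $\Lambda$) precisely when $\tau(a)=\tau(a^{\ast})$ for all $a\in A$. This reduces the corollary to showing that, under this hypothesis, the grading axiom (GC$_d$) holds for the dual basis if and only if $d$ is even. Thus the whole argument comes down to degree bookkeeping.

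First I would pin down the degrees of the dual basis elements. From the defining relation $\tau(C_{S,T}^{\lam}D_{U,V}^{\mu})=\delta_{\lam\mu}\delta_{SV}\delta_{TU}$, the element $D_{S,T}^{\lam}$ is the $\tau$-dual of $C_{T,S}^{\lam}$. Hence Lemma~\ref{3.2} gives ${\rm deg}(C_{T,S}^{\lam})+{\rm deg}(D_{S,T}^{\lam})=-d$, and combining this with the homogeneity ${\rm deg}(C_{T,S}^{\lam})={\rm deg}(S)+{\rm deg}(T)$ of the original basis, I obtain
\[
{\rm deg}(D_{S,T}^{\lam})=-d-{\rm deg}(S)-{\rm deg}(T)
\qquad(S,T\in M(\lam),\ \lam\in\Lambda).
\]
In particular every $D_{S,T}^{\lam}$ is already homogeneous (this uses only Lemma~\ref{3.2}, not the symmetry of $\tau$), so the sole remaining question is whether this degree factorizes as ${\rm deg}'(S)+{\rm deg}'(T)$ for some integer-valued function ${\rm deg}'$ on $\coprod_{\lam}M(\lam)$.

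For the forward implication, if the dual basis is a graded cellular basis with degree function ${\rm deg}'$, then choosing any $S\in M(\lam)$ and taking $S=T$ in the displayed formula yields $2\,{\rm deg}'(S)=-d-2\,{\rm deg}(S)$, whence $d=-2\bigl({\rm deg}'(S)+{\rm deg}(S)\bigr)$ is even; simultaneously the cellular axioms force $\tau(a)=\tau(a^{\ast})$ by \cite[Theorem 2.5]{L1}. For the converse, assuming $d$ even and $\tau(a)=\tau(a^{\ast})$, I would simply set ${\rm deg}'(S):=-d/2-{\rm deg}(S)$, which is an integer, and check that ${\rm deg}'(S)+{\rm deg}'(T)={\rm deg}(D_{S,T}^{\lam})$, so that (GC$_d$) holds; together with \cite[Theorem 2.5]{L1} this makes the dual basis a graded cellular basis.

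The step I expect to require the most care is the bookkeeping around the half-degree ${\rm deg}'(S)=-d/2-{\rm deg}(S)$: one must argue that \emph{integrality} of a legitimate degree function is exactly the obstruction, and that this obstruction is detected by the single evaluation $S=T$ rather than by the full two-variable relation. I would also want to confirm that the order-reversal subtlety in \cite[Theorem 2.5]{L1} is irrelevant here, since (GC$_d$) constrains only the degree function and not the partial order on $\Lambda$.
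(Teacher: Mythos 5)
Your proposal is correct and follows essentially the same route as the paper: reduce the cellular part to \cite[Theorem 2.5]{L1}, use Lemma~\ref{3.2} to get ${\rm deg}(D_{S,T}^{\lam})=-d-{\rm deg}(S)-{\rm deg}(T)$, detect evenness of $d$ by evaluating at $S=T$, and in the converse define the codegree function ${\rm deg}'(S)=-d/2-{\rm deg}(S)$ exactly as the paper's $\codeg$. No substantive differences.
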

\begin{proof}
$``\Rightarrow"$ It follows from \cite[Theorem 2.5]{L1} that if the dual basis $\{D_{S,T}^{\lam}\mid\lam\in\Lambda, S,T\in M(\lam)\}$ is cellular, then $\tau(a)=\tau(a^{\ast})$ for all $a\in A$. Since the dual basis is graded cellular, the condition ($GC_d$) implies that there is a function ${\codeg}:
\coprod_{\lam\in\Lambda}M(\lam)\rightarrow \mathbb{Z}$ such that $\deg (D_{S,S}^{\lam})=2\codeg (S)$. Thanks to Lemma~\ref{3.2},  $d=-2(\deg(S)+\codeg(S))$ is even.

$``\Leftarrow"$ Suppose $\tau(a)=\tau(a^{\ast})$ for all $a\in A$.  Then \cite{G2} or \cite[Theorem 2.5]{L1}, and Lemma \ref{3.2} imply that the dual basis is cellular and homogeneous. Now we define $${\codeg}: \coprod_{\lam\in\Lambda}M(\lam)\rightarrow \mathbb{Z},\qquad\quad\codeg (S)=-\deg(S)-\frac{d}{2}.$$
Applying Lemma \ref{3.2}, $\deg(D_{S,T}^{\lam})=-d-\deg(S)-\deg(T)=\codeg (S)+\codeg (T)$. This completes the proof.
\end{proof}

Now we are in a place to give the main result of this section.

\begin{thm}\label{3.4}
Let $A$ be a finite dimensional graded symmetric cellular algebra with a homogeneous symmetrizing
trace $\tau$ of degree $d\neq 0$. Then
\begin{enumerate}
\item[(1)] If $\rho$ is a homogeneous symmetrizing
trace of degree $d'$, then $d=d'$.
\item[(2)] All cell modules are non-projective.
\item[(3)] $H(A)\subseteq L(A)\subseteq A_{-d}$ and $\dim H(A)\leq \dim A_{0}$.
\end{enumerate}
\end{thm}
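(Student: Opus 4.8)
The plan is to read everything off two facts about the homogeneous trace: that $\tau$ vanishes on $A_j$ for $j\neq -d$ (a degree-$d$ map into $K$, which is concentrated in degree $0$, can be nonzero only on $A_{-d}$), and that Lemma~\ref{3.2} forces $\deg(C_{S,S}^{\lam})+\deg(D_{S,S}^{\lam})=-d$, so that each product $C_{S,S}^{\lam}D_{S,S}^{\lam}$, and hence each $e_\lam$, lies in $A_{-d}$. The one structural input I would isolate first is a dimension symmetry coming from non-degeneracy: for homogeneous $a\in A_i$ the associative form $f(a,b)=\tau(ab)$ can be nonzero only when $b\in A_{-d-i}$, so non-degeneracy of $f$ forces the restricted pairing $A_i\times A_{-d-i}\to K$ to be non-degenerate, whence $\dim A_i=\dim A_{-d-i}$ for every $i$. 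In particular $\dim A_0=\dim A_{-d}$, which I will reuse in (3).

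For (1) I would apply this symmetry to both traces: $\tau$ gives $\dim A_i=\dim A_{-d-i}$ and $\rho$ gives $\dim A_i=\dim A_{-d'-i}$. Composing the two, $\dim A_i=\dim A_{-d-i}=\dim A_{\,i+(d-d')}$ for all $i$, and iterating, $\dim A_i=\dim A_{\,i+n(d-d')}$ for all $n\in\mathbb{Z}$. If $d\neq d'$, the nonzero bounded sequence $(\dim A_i)_i$ would be invariant under a nonzero translation, forcing every $\dim A_i=0$, which is impossible. Hence $d=d'$. This finiteness cascade is the step I expect to be the main obstacle, since once the dimension symmetry is set up, parts (2) and (3) are essentially degree bookkeeping.

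For (2) I would set $w=C_{S,S}^{\lam}D_{S,S}^{\lam}$, which is homogeneous of degree $-d$ by Lemma~\ref{3.2}, and invoke Lemma~\ref{2.6}, giving $w^2=k_\lam w$. But $w^2\in A_{-2d}$ while $k_\lam w\in A_{-d}$, and since $d\neq 0$ these homogeneous components are distinct, so both sides must vanish; in particular $k_\lam w=0$. The defining relation of the dual basis gives $\tau(w)=\tau(C_{S,S}^{\lam}D_{S,S}^{\lam})=1\neq 0$, so $w\neq 0$, and therefore $k_\lam=0$ for every $\lam$. By Lemma~\ref{2.5} no cell module is projective.

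For (3), the containment $H(A)\subseteq L(A)$ is already known, and since each $e_\lam=\sum_{S}C_{S,S}^{\lam}D_{S,S}^{\lam}$ is a sum of elements of degree $-d$, we get $L(A)\subseteq A_{-d}$, yielding the chain $H(A)\subseteq L(A)\subseteq A_{-d}$. Finally $\dim H(A)\leq\dim A_{-d}=\dim A_0$, using the dimension symmetry at $i=0$ established at the outset.
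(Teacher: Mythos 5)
Your proof is correct, and in parts (1) and (3) it takes a genuinely different route from the paper. The tool you set up at the outset --- the graded duality $\dim A_i=\dim A_{-d-i}$ coming from the homogeneity of $\tau$ and the non-degeneracy of the pairing $(a,b)\mapsto\tau(ab)$ --- does not appear in the paper at all. For (1) the paper instead expands the $\rho$-dual element $d_{S,S}^{\lambda}$ in the $\tau$-dual basis and uses Lemma~\ref{2.4} to collapse $C_{S,S}^{\lambda}d_{S,S}^{\lambda}$ to a nonzero scalar multiple of $C_{S,S}^{\lambda}D_{S,S}^{\lambda}$, then compares degrees $-d'$ and $-d$; your translation-invariance argument on the finitely supported sequence $(\dim A_i)_i$ is more elementary and uses neither cellularity nor the dual-basis machinery, only that $A$ is a nonzero finite dimensional graded symmetric algebra. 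For (3) the paper proves $\dim H(A)\leq\dim A_0$ by noting that for any basis element $C_{X,Y}^{\varepsilon}$ of nonzero degree the element $\sum_{\lambda,S,T}C_{S,T}^{\lambda}C_{X,Y}^{\varepsilon}D_{T,S}^{\lambda}$ lies in $H(A)\subseteq A_{-d}$ yet is homogeneous of degree $-d+\deg(C_{X,Y}^{\varepsilon})$, hence vanishes, so $H(A)$ is spanned by the images of the degree-zero basis elements; you instead bound $\dim H(A)$ by $\dim A_{-d}$ and invoke $\dim A_{-d}=\dim A_0$. Part (2) is essentially the paper's argument (they argue by contradiction from a putative idempotent $k_{\lambda}^{-1}C_{S,S}^{\lambda}D_{S,S}^{\lambda}$, you argue directly that $k_{\lambda}=0$), with the same inputs Lemma~\ref{2.6}, Lemma~\ref{2.5} and Lemma~\ref{3.2}. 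Both routes are sound; yours is arguably cleaner and reusable elsewhere, while the paper's computation in (3) yields the slightly finer fact that $H(A)$ is generated by the degree-zero part of the cellular basis.
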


\begin{proof}
(1) Let $\{D_{S,T}^{\lam}\mid \lam\in\Lambda,\,\, S, T\in M(\lam)\}$
and $\{d_{S,T}^{\lam}\mid \lam\in\Lambda,\,\, S, T\in M(\lam)\}$ be
dual bases determined by $\tau$ and $\rho$, respectively. It follows
from Lemma \ref{3.2} that $\deg(C_{S, S}^{\lam}D_{S, S}^{\lam})=-d$
and $\deg(C_{S, S}^{\lam}d_{S, S}^{\lam})=-d'$ for arbitrary $S\in M(\lam)$. According to
\cite[Lemma 2.3]{L2}, $$d_{S, S}^{\lam}=\sum\limits_{\varepsilon\in\Lambda, X, Y\in M(\varepsilon)}\tau(C_{X,Y}^{\varepsilon}d_{S, S}^{\lam})D_{Y,X}^{\varepsilon}.$$ Therefore
\begin{eqnarray*}
% \nonumber to remove numbering (before each equation)
  C_{S, S}^{\lam}d_{S, S}^{\lam} &=& \sum\limits_{\varepsilon\in\Lambda, X, Y\in M(\varepsilon)}\tau(C_{X,Y}^{\varepsilon}d_{S, S}^{\lam})C_{S, S}^{\lam}D_{Y,X}^{\varepsilon} \\
   &=& \sum\limits_{ X\in M(\lam)}\tau(C_{X,S}^{\lam}d_{S, S}^{\lam})C_{S, S}^{\lam}D_{S,X}^{\lam}\\
   &=& \sum\limits_{ X\in M(\lam)}\tau(d_{S, S}^{\lam}C_{X,S}^{\lam})C_{S, S}^{\lam}D_{S,X}^{\lam}\\
   &=&\tau(C_{S,S}^{\lam}d_{S, S}^{\lam})C_{S, S}^{\lam}D_{S,S}^{\lam},
\end{eqnarray*}
where the second and the last equality follow by applying Lemma~\ref{2.4}. Clearly, $\tau(C_{S,S}^{\lam}d_{S, S}^{\lam})\neq 0$ and this forces $d=d'$.

(2) Suppose that $W(\lam)$ is a projective cell module. Then Lemma~\ref{2.5}
implies $k_{\lam}\neq 0$. Thus $k_{\lam}^{-1}C_{S,S}^\lam D_{S,S}^\lam$ is an idempotent of $A$ for arbitrary $S\in M(\lam)$ due to Lemma \ref{2.6}.
This forces ${\deg}(C_{S,S}^\lam)+{\deg}(D_{S,S}^\lam)=0$,
while Lemma \ref{3.2} shows
${\deg}(C_{S,S}^\lam)+{\deg}(D_{S,S}^\lam)=-d\neq 0$.  It is a
contradiction and we complete the proof.

(3) According to Lemma~\ref{3.2}, we have $L(A)\subseteq A_{-d}$. Note that
Li \cite{L} proved that $H(A)\subseteq L(A)$. So $H(A)\subseteq A_{-d}$ and we only  need to prove $\dim H(A)\leq \dim A_{0}$. In fact, For each $C_{X,Y}^\varepsilon$ with $\deg(C_{X,Y}^\varepsilon)\neq 0$, it follows from $H(A)\subseteq A_{-d}$ and Lemma \ref{3.2} that $$\sum_{\lam\in\Lambda, \,\,S,T\in M(\lam)} C_{S, T}^\lam C_{X,Y}^\varepsilon D_{T, S}^\lam=0.$$ This implies that $H(A)$ is a $K$-span of $$\biggl\{\sum_{\lam\in\Lambda, \,\,S,T\in M(\lam)} C_{S, T}^\lam C_{X,Y}^\varepsilon D_{T, S}^\lam\mid C_{X,Y}^\varepsilon\in A_0\biggr\}$$ and consequently, $\dim H(A)\leq \dim A_{0}$.
\end{proof}

\begin{remark}\label{3.5}
Hu and Mathas \cite{HM} proved that the blocks
$\mathscr{H}_{\beta}^{\Lambda}$ of the cyclotomic Hecke algebras of
type $G(m, 1, n)$ are graded symmetric cellular algebras with
homogeneous trace form of degree $-2{\rm def}\beta$.  Now
Theorem~\ref{3.4} implies that the degree $-2{\rm def}\beta$ is the only one which
makes $\mathscr{H}_{\beta}^{\Lambda}$ to be graded symmetric cellular and the (ungraded) cell modules of
$\mathscr{H}_{\beta}^{\Lambda}$ are non-projective
when ${\rm def}\beta\neq 0$.
\end{remark}

Let us remark that the equality in Theorem \ref{3.4}(3) may be held. The following is an example given in \cite{L},  which is a graded symmetric cellular algebra.

\begin{example}\label{3.6}
Let $K$ be a field with ${\rm Char}K\nmid n+1$ and let $Q$ be
the following quiver
$$\xymatrix@C=13mm{
  \bullet \ar@<2.5pt>[r]^{\alpha_1}  & \bullet \ar@<2.5pt>[r]^(0.4){\alpha_2}
  \ar@<2.5pt>@[r][l]^(1){1}^{\alpha_1'}^(0){2}
  &\bullet\ar@<2.5pt>@[r][l]^(0.20){3}^(0.6){\alpha_2'}\cdots
  \bullet\ar@<2.5pt>[r]^(0.6){\alpha_{n-1}} & \bullet\ar@<2.5pt>@[r][l]^(0.35){\alpha_{n-1}'}^(0.75){n-1}^(0){n}\\
}$$ with relation $\rho$ given as follows:

\begin{enumerate}
\item[(1)] all paths of length $\geq 3$;

\item[(2)] $\alpha'_{i}\alpha_{i}-\alpha_{i+1}\alpha'_{i+1}$,
$i=1,\cdots, n-2$;

\item[(3)] $\alpha_{i}\alpha_{i+1}$, $\alpha'_{i+1}\alpha'_{i}$,
$i=1,\cdots, n-2$.
\end{enumerate}
Then $A=K(Q,\rho)$ is a graded algebra in a natural way. Now we define the homogeneous symmetrizing trace $\tau$ of $A$ by
\begin{enumerate}
\item[(1)] $\tau(e_{1})=\cdots=\tau(e_{n})=0$;

\item[(2)] $\tau(\alpha_{i})=\tau(\alpha'_{i})=0$, $i=1,\cdots, n-1$;

\item[(3)]
$\tau(\alpha_{i}\alpha'_{i})=\tau(\alpha'_{i}\alpha_{i})=1$,
$i=1,\cdots, n-1$.
\end{enumerate}
Then the degree of $\tau$ is $-2$, $\tau(a)=\tau(a^\ast)$ for all $a\in A$, and $A_0=\{\sum k_ie_i\mid k_i\in K\}$.
Furthermore, $A$ is a graded symmetric cellular algebra with a homogeneous cellular basis
$\{C_{i,j}^k\mid 1\leq k\leq n+1, 1\leq i,j\leq 2\}$ as follows:
\[ \begin{matrix}
\begin{matrix} e_{1}\end{matrix} ;&
\begin{matrix} \alpha_{1}\alpha_{1}^{'} & \alpha_{1}\\ \alpha_{1}^{'} &
e_{2}\end{matrix} ; &
\begin{matrix} \alpha_{2}\alpha_{2}^{'} & \alpha_{2}\\ \alpha_{2}^{'} &
e_{3}\end{matrix} ;& \cdots ;&
\begin{matrix} \alpha_{n-1}\alpha_{n-1}^{'} & \alpha_{n-1}\\ \alpha_{n-1}^{'} &
e_{n}\end{matrix} ;&
\begin{matrix} \alpha_{n-1}^{'}\alpha_{n-1}\end{matrix},
\end{matrix} \]
and the Higman ideal $H(A)$ is generated by
\begin{align*}
&\left\{2\alpha_{1}\alpha_{1}^{'}+\alpha_{2}\alpha_{2}^{'},
\alpha_{1}\alpha_{1}^{'}+2\alpha_{2}\alpha_{2}^{'}+\alpha_{3}\alpha_{3}^{'},
\alpha_{2}\alpha_{2}^{'}+2\alpha_{3}\alpha_{3}^{'}+\alpha_{4}\alpha_{4}^{'},
\cdots,\right.\\
&\left.\qquad\alpha_{n-3}\alpha_{n-3}^{'}+2\alpha_{n-2}\alpha_{n-2}^{'}+\alpha_{n-1}\alpha_{n-1}^{'},
\alpha_{n-2}\alpha_{n-2}^{'}+2\alpha_{n-1}\alpha_{n-1}^{'}\right\}.
\end{align*}
It is easy to check that $H(A)\subset A_2$ and $\dim H(A)=\dim A_0=n$.
Furthermore, a direct computation yields that $C_{i,i}^kD_{i,i}^k\in\mathcal {Z}_A(A_0)$ for $i=1,2$ and $k=1, \ldots, n+1$.
However, these elements can not form the whole $\mathcal {Z}_A(A_0)$.

\end{example}

\section{Centralizer of $A_0$}
This section devotes to give a graded version of \cite[Theorem 3.2]{L}. Throughout this section $A$ is a finite dimensional graded symmetric cellular $K$-algebra with a homogeneous symmetrizing trace $\tau$ of degree $d$.

For any integer $c$, it is interesting to consider the following subset $H_c(A)$ of the Higman ideal $H(A)$ of $A$ $$H_c(A):=\biggl\{\sum\limits_{{\rm deg}(x_i)=c} x_iay_i\mid a\in A\biggr\}.$$

\begin{prop}\label{4.1}Let $H_{\rm gr}(A)$ be the $K$-span of all $H_c(A)$ and $\mathcal {Z}_A(A_0)$ the centralizer of $A_0$ in $A$. Then $H_{\rm gr}(A)\subseteq\mathcal {Z}_A(A_0)$.
\end{prop}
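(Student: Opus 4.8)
The plan is to run the classical argument showing that $\sum_i x_iay_i$ is central, but carried out degree by degree, the point being that left or right multiplication by a degree-zero element preserves the grading. Fix a homogeneous basis $\mathscr{B}=\{x_1,\dots,x_n\}$ of $A$ with its homogeneous dual basis $\mathscr{D}=\{y_1,\dots,y_n\}$, which is homogeneous by Lemma~\ref{3.2}. Since $\mathcal{Z}_A(A_0)$ is a $K$-subspace and $H_{\rm gr}(A)$ is by definition the $K$-span of the $H_c(A)$, it suffices to show that for every integer $c$, every $a\in A$ and every $b\in A_0$ the element $z_c:=\sum_{{\rm deg}(x_i)=c}x_iay_i$ commutes with $b$.

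First I would record the two structure relations and, crucially, the grading restrictions they carry. Writing $bx_i=\sum_j\rho_{ji}x_j$ with $\rho_{ji}\in K$, the hypothesis ${\rm deg}(b)=0$ forces $\rho_{ji}=0$ whenever ${\rm deg}(x_j)\neq{\rm deg}(x_i)$; in particular, if ${\rm deg}(x_i)=c$ then only terms with ${\rm deg}(x_j)=c$ survive. The companion relation is $y_ib=\sum_j\rho_{ij}y_j$. This can be obtained from Lemma~\ref{2.0} by taking $b$ to be a basis element and extending by linearity, or proved directly: if $y_ib=\sum_j\mu_{ji}y_j$, then using $\tau(x_ky_l)=\delta_{kl}$ together with the symmetry and associativity of $\tau$ one computes $\mu_{ji}=\tau(x_jy_ib)=\tau(bx_jy_i)=\rho_{ij}$. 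The same degree constraint transfers to this relation.

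With these relations in hand the verification reduces to a relabelling of indices. Expanding and using the degree restriction to confine the inner sums to ${\rm deg}(x_j)=c$,
\begin{align*}
bz_c &= \sum_{{\rm deg}(x_i)=c}(bx_i)ay_i=\sum_{{\rm deg}(x_i)=c}\sum_{{\rm deg}(x_j)=c}\rho_{ji}\,x_jay_i,\\
z_cb &= \sum_{{\rm deg}(x_i)=c}x_ia(y_ib)=\sum_{{\rm deg}(x_i)=c}\sum_{{\rm deg}(x_j)=c}\rho_{ij}\,x_iay_j.
\end{align*}
Swapping the names $i\leftrightarrow j$ in the expression for $z_cb$ turns it into exactly the expression for $bz_c$, whence $bz_c=z_cb$ and therefore $z_c\in\mathcal{Z}_A(A_0)$.

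I expect the only genuine content to be the bookkeeping in the second step: getting the transposed indices right in the dual relation $y_ib=\sum_j\rho_{ij}y_j$ and observing that the degree-zero hypothesis on $b$ makes the transition matrix $(\rho_{ji})$ block-diagonal with respect to the grading. This block-diagonality is precisely what lets the restriction to a single degree $c$ survive multiplication by $A_0$; had $b$ had nonzero degree the degree label would shift and the cancellation would fail, which is exactly why it is the centralizer of $A_0$, and not of all of $A$, that appears. Everything after that is the formal degree-by-degree version of the ungraded proof that the Higman ideal is central.
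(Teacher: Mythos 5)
Your argument is correct and follows essentially the same route as the paper's: both expand $bx_i$ and $y_ib$ via the structure constants (your $\rho_{ji}=\tau(bx_jy_i)$ is exactly the transposed relation of Lemma~\ref{2.0}), observe that $\deg(b)=0$ forces the transition matrix to preserve the degree-$c$ block, and conclude by relabelling indices. The only cosmetic difference is that the paper works with a degree-zero basis element $x_j$ and the explicit constants $r_{jik}$, while you take a general $b\in A_0$, which amounts to the same thing by linearity.
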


\begin{proof}
Clearly, we only need to prove $H_c(A)\subseteq\mathcal {Z}_A(A_0)$ for each integer $c$. Assume that ${\rm deg}(x_j)=0$. Thanks to Lemma~\ref{2.0},
$$\sum\limits_{{\rm deg}(x_i)=c} x_jx_iay_i=\sum\limits_{{\rm deg}(x_i)=c}\sum\limits_kr_{jik}x_kay_i,$$
where $r_{jik}=0$ when ${\rm deg}(x_k)\neq c$. This implies that
$$\sum\limits_{{\rm deg}(x_i)=c} x_jx_iay_i=\sum\limits_{\substack{i,\,k\\{\rm deg}(x_i)={\rm deg}(x_k)=c}}r_{jik}x_kay_i. \eqno(*)$$
While Lemma\ref{2.0} implies
$$\sum\limits_{{\rm deg}(x_i)=c} x_iay_ix_j=\sum\limits_{{\rm deg}(x_i)=c}\sum\limits_k r_{jki}x_iay_k,$$
where $r_{jki}=0$ if ${\rm deg}(x_k)\neq c$. Thus
$$\sum\limits_{{\rm deg}(x_i)=c} x_iay_ix_j=\sum\limits_{\substack{i,\,k\\{\rm deg}(x_i)={\rm deg}(x_k)=c}}r_{jki}x_iay_k. \eqno(**)$$
Comparing the equalities ($*$) and ($**$), we complete the proof.
\end{proof}

Now we consider the following elements  $$e_{\lam, \,c}:=\sum\limits_{\mathrm{deg}(S)=c}C_{S,S}^{\lam}D_{S,S}^{\lam}.$$

 Applying Lemmas~\ref{2.4} and \ref{2.6}, we get the following lemma.
\begin{lem}\label{4.2'}Keep notations as above. Then $e_{\lam, \,c}e_{\mu, \,c'}=\delta_{\lam\mu}\delta_{cc'}k_{\lam}e_{\lam, \,c}.$
\end{lem}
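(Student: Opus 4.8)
The plan is to prove the multiplication formula $e_{\lam,\,c}e_{\mu,\,c'}=\delta_{\lam\mu}\delta_{cc'}k_\lam e_{\lam,\,c}$ by expanding the product and reducing each cross term via the structural lemmas already established. Writing out the definition, we have
\[
e_{\lam,\,c}e_{\mu,\,c'}=\sum_{\deg(S)=c}\ \sum_{\deg(U)=c'}C_{S,S}^{\lam}D_{S,S}^{\lam}C_{U,U}^{\mu}D_{U,U}^{\mu},
\]
so the whole argument hinges on understanding the middle factor $D_{S,S}^{\lam}C_{U,U}^{\mu}$. I would first handle the case $\lam\neq\mu$: by the poset-vanishing clauses Lemma~\ref{2.4}(7)--(8), the product $C_{S,S}^{\lam}D_{U,U}^{\mu}$ (and symmetrically $D_{S,S}^{\lam}C_{U,U}^{\mu}$) dies unless $\mu\leq\lam$, and an antisymmetric use of both clauses forces $\lam=\mu$ whenever a nonzero contribution survives. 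That gives the factor $\delta_{\lam\mu}$.

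Next, assuming $\lam=\mu$, the key simplification is the orthogonality clauses Lemma~\ref{2.4}(5)--(6): $D_{S,S}^{\lam}C_{U,U}^{\lam}=0$ unless the ``inner'' indices match, i.e.\ unless $S=U$. Thus only the diagonal terms $S=U$ in the double sum survive, collapsing it to $\sum_{\deg(S)=c,\ \deg(S)=c'}C_{S,S}^{\lam}D_{S,S}^{\lam}C_{S,S}^{\lam}D_{S,S}^{\lam}$. This enforces $c=c'$ (there are no surviving terms unless the two degree constraints are compatible, i.e.\ $c=c'$), supplying the factor $\delta_{cc'}$. When $c=c'$, each surviving summand is precisely $(C_{S,S}^{\lam}D_{S,S}^{\lam})^2$, which by Lemma~\ref{2.6} equals $k_\lam C_{S,S}^{\lam}D_{S,S}^{\lam}$. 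Summing over $S$ with $\deg(S)=c$ then yields $k_\lam\sum_{\deg(S)=c}C_{S,S}^{\lam}D_{S,S}^{\lam}=k_\lam e_{\lam,\,c}$, as desired.

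I expect the main obstacle to be bookkeeping in the $\lam=\mu$ step: one must be careful that the vanishing in Lemma~\ref{2.4}(5)--(6) refers to the adjacency of the correct pair of indices in the product $D_{S,S}^{\lam}C_{U,U}^{\lam}$, and that applying these clauses genuinely kills every off-diagonal term rather than merely reducing it modulo a lower-order ideal. A clean way to organize this is to first prove the intermediate identity $D_{S,S}^{\lam}C_{U,U}^{\lam}=\delta_{SU}D_{S,S}^{\lam}C_{S,S}^{\lam}$ and note that $D_{S,S}^{\lam}C_{S,S}^{\lam}$ is itself (up to the scalar $k_\lam$) an idempotent-type element, so that the four-fold product telescopes exactly. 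The degree constraint $\delta_{cc'}$ then follows automatically, since $S=U$ forces $\deg(S)=\deg(U)$ and hence $c=c'$, which is the point where the graded refinement of the classical computation enters.
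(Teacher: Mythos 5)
Your overall strategy is exactly what the paper intends: the paper offers no written proof of Lemma~\ref{4.2'} beyond the sentence ``Applying Lemmas~\ref{2.4} and \ref{2.6}, we get the following lemma,'' and your reduction --- kill the off-diagonal terms with the vanishing clauses of Lemma~\ref{2.4}, then collapse the surviving diagonal terms with Lemma~\ref{2.6} --- is the right skeleton. Your treatment of the case $\lam=\mu$ is correct and complete: Lemma~\ref{2.4}(6) gives $D_{S,S}^{\lam}C_{U,U}^{\lam}=0$ for $S\neq U$, the surviving diagonal terms force $c=c'$ (an empty sum otherwise), and Lemma~\ref{2.6} produces the factor $k_{\lam}$.

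The one step that does not work as written is the factor $\delta_{\lam\mu}$. In the expansion of $C_{S,S}^{\lam}D_{S,S}^{\lam}C_{U,U}^{\mu}D_{U,U}^{\mu}$ the only adjacent mixed pair is $D_{S,S}^{\lam}C_{U,U}^{\mu}$, and Lemma~\ref{2.4}(8) kills it only when $\lam\nleq\mu$; the pair $C_{S,S}^{\lam}D_{U,U}^{\mu}$, to which you appeal for the reverse inequality $\mu\leq\lam$, simply does not occur adjacently in the product, so the advertised ``antisymmetric use of both clauses'' is not available and the case $\lam<\mu$ is left open. To close it, observe that $C_{S,S}^{\lam}D_{S,S}^{\lam}C_{U,U}^{\mu}$ lies in the two-sided ideal $A(\leq\lam)$, hence is a linear combination of cellular basis elements $C_{P,Q}^{\nu}$ with $\nu\leq\lam$; by Lemma~\ref{2.4}(7) each product $C_{P,Q}^{\nu}D_{U,U}^{\mu}$ vanishes unless $\mu\leq\nu\leq\lam$. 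Together with the condition $\lam\leq\mu$ coming from clause (8), this forces $\lam=\mu$, which is the missing half of the argument; the rest of your proof then goes through as stated.
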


Define $$L_{\rm gr}(A):=\biggl\{\sum\limits_{\lam\in\Lambda, \,c \,\in\,\Z} r_{\lam, \,c}e_{\lam, \,c}\mid r_{\lam, \,c}\in K\biggr\}.$$
Using the similar argument as \cite[Propositition 3.3 (1)]{L}, we can show that
$L_{\rm gr}(A)$ is independent of the choice of $\tau$.

The following fact will be used later.

\begin{lem}\label{4.2}Keep notations as above. Then
$L_{\rm gr}(A)\subseteq \mathcal {Z}_A(A_0)$.
\end{lem}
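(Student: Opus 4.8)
The plan is to reduce the inclusion to a direct computation showing that each generator $e_{\lam,\,c}$ commutes with every homogeneous element of $A_0$, and then to exploit the cellular relations of Lemma~\ref{2.4} together with the grading to match the two one-sided products. Since $L_{\rm gr}(A)$ is the $K$-span of the elements $e_{\lam,\,c}$ and $\mathcal{Z}_A(A_0)$ is a subspace, it suffices to prove $a\,e_{\lam,\,c}=e_{\lam,\,c}\,a$ for every $\lam\in\Lambda$, every $c\in\Z$, and every homogeneous $a\in A_0$; by linearity we may take $a=C_{P,Q}^{\mu}$ with ${\rm deg}(P)+{\rm deg}(Q)=0$.

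First I would compute the left-hand product. Expanding $aC_{S,S}^{\lam}$ by (GC3) produces a level-$\lam$ part $\sum_{S'}r_a(S',S)C_{S',S}^{\lam}$ plus a remainder in $A(<\lam)$; the remainder is annihilated on the right by $D_{S,S}^{\lam}$ thanks to Lemma~\ref{2.4}(7), so only the level-$\lam$ part survives. Because $a\in A_0$ preserves degrees, $r_a(S',S)=0$ unless ${\rm deg}(S')={\rm deg}(S)=c$, whence
\[
a\,e_{\lam,\,c}=\sum_{{\rm deg}(S)={\rm deg}(S')=c}r_a(S',S)\,C_{S',S}^{\lam}D_{S,S}^{\lam}.
\]

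Next I would compute the right-hand product, which I expect to be the main obstacle. Here I would expand $D_{S,S}^{\lam}a$ via Lemma~\ref{2.4}(1), producing a sum of the $D_{X,Y}^{\epsilon}$ weighted by the coefficient of $C_{S,S}^{\lam}$ in $C_{P,Q}^{\mu}C_{Y,X}^{\epsilon}$. The delicate point is a two-sided constraint on $\epsilon$: Lemma~\ref{2.4}(7) forces $\epsilon\leq\lam$ in order that $C_{S,S}^{\lam}D_{X,Y}^{\epsilon}\neq0$, while the fact that a product of two cellular basis elements lies in the span of basis elements at levels $\leq\epsilon$ (a consequence of (GC3)) forces $\lam\leq\epsilon$ for $C_{S,S}^{\lam}$ to occur at all; hence $\epsilon=\lam$, and then Lemma~\ref{2.4}(5) forces $X=S$. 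A second appeal to (GC3) identifies the surviving coefficient as $r_a(S,Y)$, and the grading again restricts to ${\rm deg}(Y)=c$, giving
\[
e_{\lam,\,c}\,a=\sum_{{\rm deg}(S)={\rm deg}(Y)=c}r_a(S,Y)\,C_{S,S}^{\lam}D_{S,Y}^{\lam}.
\]

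Finally I would reconcile the two expressions. After relabelling the summation indices, both sums run over the same index set with the same coefficients $r_a(\cdot,\cdot)$, so it remains only to verify the single cellular identity $C_{s,t}^{\lam}D_{t,t}^{\lam}=C_{s,s}^{\lam}D_{s,t}^{\lam}$ for ${\rm deg}(s)={\rm deg}(t)=c$; this is immediate from Lemma~\ref{2.4}(3) (taking the free middle index equal to $s$). Therefore $a\,e_{\lam,\,c}=e_{\lam,\,c}\,a$, and passing to $K$-spans yields $L_{\rm gr}(A)\subseteq\mathcal{Z}_A(A_0)$. Everything apart from the level-pinning argument $\epsilon=\lam$ is routine bookkeeping with degrees and the relations of Lemma~\ref{2.4}, so that step is where I would focus the care.
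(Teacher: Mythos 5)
Your proof is correct and follows essentially the same route as the paper's: compute both one-sided products of $e_{\lam,\,c}$ with a degree-zero cellular basis element, use Lemma~\ref{2.4} together with the grading to pin the surviving terms to level $\lam$ with the right degrees, and compare. The only difference is that you make explicit the final identification of the two resulting sums via Lemma~\ref{2.4}(3), a relabelling step the paper leaves implicit.
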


\begin{proof}
Clearly, we only need to prove $e_{\lam, \,c}\in \mathcal {Z}_A(A_0).$ Let $C_{U,V}^{\mu}$ be a basis element of degree 0. Then by Lemma \ref{2.4},
\begin{eqnarray*}
\sum_{{\rm deg}(S)=c}C_{S,S}^{\lam}D_{S,S}^{\lam}C_{U,V}^{\mu}& =&\sum_{{\rm deg}(S)=c}\sum_{\substack{\epsilon\in \Lambda,\\ X,Y\in
M(\epsilon)}}r_{(U,V,\mu),(X,Y,\epsilon),(S,S,\lam)}C_{S,S}^{\lam}D_{Y,X}^{\epsilon}\\
&=&\sum_{{\rm deg}(S)=c}\sum_{X\in
M(\lam)}r_{(U,V,\mu),(X,S,\lam),(S,S,\lam)}C_{S,S}^{\lam}D_{S,X}^{\lam}\\&=&\sum_{{\rm deg}(S)={\rm
deg}(X)=c}r_{(U,V,\mu),(X,S,\lam),(S,S,\lam)}C_{S,S}^{\lam}D_{S,X}^{\lam},
\end{eqnarray*}
where the second equality follows from Definition \ref{2.3} and Lemma~\ref{2.4}(7), the last one follows by comparing the degree of both sides.

On the other hand, we have
\begin{align*}
C_{U,V}^{\mu}\sum_{
\mathrm{deg}(S)=c}C_{S,S}^{\lam}D_{S,S}^{\lam}&=\sum_{
\mathrm{deg}(S)=c}\sum_{\substack{\epsilon\in \Lambda, \\X,Y\in
M(\epsilon)}}r_{(U,V,\mu),(S,S,\lam),(X,Y,\epsilon)}C_{X,Y}^{\epsilon}D_{S,S}^{\lam}\\
&=\sum_{\mathrm{deg}(S)=c}\sum_{X\in
M(\lam)}r_{(U,V,\mu),(S,S,\lam),(X,S,\lam)}C_{X,S}^{\lam}D_{S,S}^{\lam}\\
&=\sum_{\mathrm{deg}(S)=\mathrm{deg}(X)=c}r_{(U,V,\mu),(S,S,\lam),(X,S,\lam)}C_{X,S}^{\lam}D_{S,S}^{\lam},
\end{align*}
where the second equality and the third one follow form Lemma~\ref{2.4}(5, 7).
Thus $e_{\lam,c}\in
\mathcal {Z}_A(A_0)$ as required.
\end{proof}

\begin{cor}Keep notations as above.  If
$d\neq 0$ then $L_{\rm gr}(A)\subsetneqq \mathcal {Z}_A(A_0)$.
\end{cor}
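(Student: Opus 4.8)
The plan is to prove strictness by exhibiting a single explicit element of $\mathcal{Z}_A(A_0)$ that cannot belong to $L_{\rm gr}(A)$; the natural candidate is the identity $1$ of $A$. Since Lemma~\ref{4.2} already supplies the inclusion $L_{\rm gr}(A)\subseteq\mathcal{Z}_A(A_0)$, it suffices to separate these two sets by a degree argument.

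First I would pin down the degree of the generators of $L_{\rm gr}(A)$. For each $\lam$ and each $c$, if $\deg(S)=c$ then the grading condition $(GC_d)$ gives $\deg(C_{S,S}^{\lam})=2c$, while Lemma~\ref{3.2} gives $\deg(D_{S,S}^{\lam})=-d-2c$; hence $\deg(C_{S,S}^{\lam}D_{S,S}^{\lam})=-d$, so that each $e_{\lam,\,c}$ is homogeneous of degree $-d$. Consequently $L_{\rm gr}(A)\subseteq A_{-d}$.

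Next I would invoke the standard fact that in a $\mathbb{Z}$-graded algebra the identity is homogeneous of degree $0$, so $1\in A_0$; being central, $1\in\mathcal{Z}(A)\subseteq\mathcal{Z}_A(A_0)$. Because $d\neq 0$, the homogeneous components $A_0$ and $A_{-d}$ are distinct summands of $A$ meeting only in $0$; as $1\neq 0$ lies in $A_0$, it cannot lie in $A_{-d}$, whence $1\notin L_{\rm gr}(A)$. Combined with Lemma~\ref{4.2}, this yields the strict inclusion $L_{\rm gr}(A)\subsetneqq\mathcal{Z}_A(A_0)$.

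The argument has essentially no hard part: the only points requiring care are the degree bookkeeping for $e_{\lam,\,c}$, which rests entirely on Lemma~\ref{3.2}, and the observation that the unit of a graded algebra lies in degree $0$, so that the hypothesis $d\neq 0$ forces $1$ out of $A_{-d}$ and hence out of $L_{\rm gr}(A)$.
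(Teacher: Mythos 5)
Your proposal is correct and follows essentially the same route as the paper: the paper's proof simply observes that $L_{\rm gr}(A)\subseteq A_{-d}$ and that $\mathcal{Z}_A(A_0)$ therefore cannot be contained in $L_{\rm gr}(A)$ when $d\neq 0$. You merely make explicit the witnessing element ($1\in A_0\cap\mathcal{Z}_A(A_0)$, and $A_0\cap A_{-d}=0$), which is exactly the intended argument.
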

\begin{proof}
Note that $L_{\rm gr}(A)\subseteq A_{-d}$. Hence $\mathcal {Z}(A_0)$ is not contained in $L_{\rm gr}(A)$.
\end{proof}

 The relationship between $H_{\rm gr}(A)$ and $L_{\rm gr}(A)$ is given by the following lemma, which can be proved by the similar argument of \cite[Theorem 3.2]{L}.

\begin{lem}\label{4.3}Keep notations as above. Then
$H_{\rm gr}(A)\subseteq L_{\rm gr}(A)$.
\end{lem}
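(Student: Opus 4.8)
The plan is to mimic the computation of \cite[Theorem 3.2]{L} while bookkeeping the internal grading throughout. By linearity it suffices to show that each generator of $H_c(A)$ lies in $L_{\rm gr}(A)$. Since the defining relation $\tau(C_{S,T}^{\lam}D_{U,V}^{\mu})=\delta_{\lam\mu}\delta_{SV}\delta_{TU}$ shows that the dual basis element of $C_{S,T}^{\lam}$ is $D_{T,S}^{\lam}$, such a generator has the form $\sum_{\deg(C_{S,T}^{\lam})=c}C_{S,T}^{\lam}aD_{T,S}^{\lam}$ with $a\in A$; expanding $a$ in the cellular basis reduces us to $a=C_{U,V}^{\mu}$. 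The task is then to evaluate
$$h_c:=\sum_{\lam\in\Lambda}\ \sum_{\substack{S,T\in M(\lam)\\ \deg(S)+\deg(T)=c}}C_{S,T}^{\lam}C_{U,V}^{\mu}D_{T,S}^{\lam}$$
and to recognise it as an element of $L_{\rm gr}(A)$.

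First I would analyse a single summand $C_{S,T}^{\lam}C_{U,V}^{\mu}D_{T,S}^{\lam}$. By the cellular axiom (GC3), the product $C_{S,T}^{\lam}C_{U,V}^{\mu}$ lies in the span of $\{C_{X,Y}^{\epsilon}\mid\epsilon\leq\lam\}$, and Lemma~\ref{2.4}(7) annihilates every contribution with $\epsilon<\lam$ upon right multiplication by $D_{T,S}^{\lam}$; hence only the top term $\epsilon=\lam$ survives. Applying the anti-automorphism $\ast$ to (GC3) shows that right multiplication preserves the row index modulo $A(<\lam)$, so the surviving $\lam$-component keeps the first index $S$, while Lemma~\ref{2.4}(5) forces its second index to equal $T$. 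Thus the summand collapses to $r_{(S,T,\lam),(U,V,\mu),(S,T,\lam)}\,C_{S,T}^{\lam}D_{T,S}^{\lam}$, and Lemma~\ref{2.4}(3) rewrites $C_{S,T}^{\lam}D_{T,S}^{\lam}=C_{S,S}^{\lam}D_{S,S}^{\lam}$.

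The decisive point is that the coefficient $r_{(S,T,\lam),(U,V,\mu),(S,T,\lam)}$ is the diagonal entry of the ($\ast$-twisted) action of $C_{U,V}^{\mu}$ on the cell module $W(\lam)$, so it depends on $T$ but \emph{not} on the particular $S$. I would therefore perform the sum over $S,T$ with $\deg(S)+\deg(T)=c$ by grouping according to $s:=\deg(S)$: the inner sum over $T$ with $\deg(T)=c-s$ produces a scalar $\gamma_{\lam,\,c-s}$ independent of the chosen $S$, while the outer sum over $S$ with $\deg(S)=s$ assembles precisely $e_{\lam,\,s}=\sum_{\deg(S)=s}C_{S,S}^{\lam}D_{S,S}^{\lam}$. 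This yields $h_c=\sum_{\lam,\,s}\gamma_{\lam,\,c-s}\,e_{\lam,\,s}\in L_{\rm gr}(A)$, as required. Note that when $\deg(C_{U,V}^{\mu})\neq0$ the coefficient vanishes on degree grounds, so $h_c=0$, which is consistent with $L_{\rm gr}(A)\subseteq A_{-d}$.

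The main obstacle is the index bookkeeping in the middle step, namely combining the poset filtration with the several parts of Lemma~\ref{2.4} to isolate the single surviving term. The genuinely new feature beyond \cite[Theorem 3.2]{L} is the grading refinement: one must check that the partial trace $\gamma_{\lam,\,c-s}$ over the degree-$(c-s)$ part of $W(\lam)$ is constant in $S$ once $\deg(S)$ is fixed. This homogeneity of the coefficients is exactly what allows the $S$-sum to close up into the graded elements $e_{\lam,\,s}$ rather than merely into the ungraded $e_\lam$, and it is where the hypothesis that $\tau$ is homogeneous enters.
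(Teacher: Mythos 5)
Your argument is correct and is essentially the intended one: the paper gives no written proof of Lemma~\ref{4.3}, deferring to the computation of \cite[Theorem 3.2]{L}, and your derivation (collapsing $C_{S,T}^{\lam}C_{U,V}^{\mu}D_{T,S}^{\lam}$ to $r\,C_{S,S}^{\lam}D_{S,S}^{\lam}$ via Lemma~\ref{2.4} and using that the surviving coefficient is independent of $S$ so that the degree-restricted sums close up into the $e_{\lam,\,c}$) is exactly that argument with the grading tracked. No gaps.
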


Combining Lemmas~\ref{4.2} and \ref{4.3}  yields the main result of this section.

\begin{thm}\label{4.4}Keep notations as above. Then
$H_{\rm gr}(A)\subseteq L_{\rm gr}(A)\subseteq \mathcal {Z}_A(A_0)$.
\end{thm}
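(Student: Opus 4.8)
The plan is to obtain the advertised chain of inclusions purely by transitivity, since each of its two links has already been isolated as a separate lemma. Reading the statement from right to left, the inclusion $L_{\rm gr}(A)\subseteq \mathcal{Z}_A(A_0)$ is exactly Lemma~\ref{4.2}, while the inclusion $H_{\rm gr}(A)\subseteq L_{\rm gr}(A)$ is exactly Lemma~\ref{4.3}. Concatenating these two gives $H_{\rm gr}(A)\subseteq L_{\rm gr}(A)\subseteq \mathcal{Z}_A(A_0)$, which is the assertion of the theorem. At this stage there is therefore no genuine obstacle left: the entire content of the theorem has been pushed into the two preceding lemmas, and the proof itself is a one-line appeal to them.

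If instead I were to argue from scratch, I would first establish the right-hand inclusion, as it is the more structural of the two. For this I would fix a basis element $C_{U,V}^{\mu}$ of degree $0$ and compute both $C_{S,S}^{\lam}D_{S,S}^{\lam}C_{U,V}^{\mu}$ and $C_{U,V}^{\mu}C_{S,S}^{\lam}D_{S,S}^{\lam}$ by expanding the products via Lemma~\ref{2.4}(1),(2), and then collapsing the resulting sums using parts (5) and (7). The key points are that Lemma~\ref{2.4}(7) restricts the indexing set to $\epsilon=\lam$, that parts (5) and (6) kill the off-diagonal terms, and that a comparison of degrees (using Lemma~\ref{3.2} together with the hypothesis $\deg(C_{U,V}^{\mu})=0$) forces the surviving index $X$ to run only over $\deg(X)=c$. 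Matching the two resulting expressions then shows $e_{\lam,c}\in\mathcal{Z}_A(A_0)$ for every $\lam$ and $c$, hence $L_{\rm gr}(A)\subseteq \mathcal{Z}_A(A_0)$.

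For the left-hand inclusion I would follow the ungraded template of \cite[Theorem 3.2]{L}: expand a generator $\sum_{\deg(x_i)=c}x_i a y_i$ of $H_c(A)$ in the cellular dual basis and show that, after using the associativity of $\tau$ and the multiplication rules of Lemma~\ref{2.4}, only the diagonal contributions $e_{\lam,c}$ survive, so that $H_c(A)\subseteq L_{\rm gr}(A)$ for each $c$ and therefore $H_{\rm gr}(A)\subseteq L_{\rm gr}(A)$. The main obstacle along that route is the bookkeeping needed to keep the degree constraint $\deg(x_i)=c$ synchronized across the dual basis, and this is precisely where Lemma~\ref{3.2}, which pins down $\deg(x_i)+\deg(y_i)=-d$, does the essential work. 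Given that both lemmas are already in hand, however, the proof of the theorem reduces to simply invoking them in succession.
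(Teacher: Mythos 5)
Your proposal is correct and matches the paper exactly: the paper's proof of Theorem~\ref{4.4} is precisely the one-line combination of Lemma~\ref{4.2} (for $L_{\rm gr}(A)\subseteq \mathcal{Z}_A(A_0)$) and Lemma~\ref{4.3} (for $H_{\rm gr}(A)\subseteq L_{\rm gr}(A)$). Your sketches of how those two lemmas are themselves established also agree with the arguments the paper gives or cites for them.
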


\section{Semisimple case}

In this section,  we give a semisimlicity criterion for a graded symmetric cellular algebra, that is, $A$ is semisimple if and only if $L_{\rm gr}(A)=\mathcal {Z}_A(A_0)$.

Now let $S_n$ be the symmetric group on $n$ letters and $A=M_n(K)$. For $\sigma_1, \sigma_2\in S_n$, we set $e_{ij}=C_{\sigma_1(i)\sigma_2(j)}$, $1\leq i, j\leq n$, where $e_{ij}$ is the $n\times n$ matrix with only one non-zero $(i,j)$-entry 1.

\begin{prop}\label{5.1}
Keep notations as the above and define $\sigma=\sigma_1\sigma_2^{-1}$. Let $``\rm deg"$ be a function from $\{1,2,\cdots, n\}$ to $\mathbb{Z}$.
Then the basis $\{C_{i,j}\mid 1\leq i,j\leq n\}$ is graded cellular if and only if
\begin{enumerate}
\item [(1)] $\sigma^2=\mathrm{id}$;
\item [(2)] ${\rm deg}(i)=-{\rm deg}(\sigma(i))$ for $1\leq i\leq n$.
\end{enumerate}
\end{prop}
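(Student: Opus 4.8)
The plan is to reduce the statement to the single explicit multiplication rule for the relabelled matrix units and then to read off conditions (1) and (2) as the compatibility requirements for, respectively, the cellular involution and the grading. Since $A=M_n(K)$ is simple, there is a single cell $\lambda$ with $M(\lambda)=\{1,\dots,n\}$ and $A(<\lambda)=0$, so no lower terms ever intervene. First I would invert the defining relation $e_{ij}=C_{\sigma_1(i)\sigma_2(j)}$ to get $C_{k,l}=e_{\sigma_1^{-1}(k),\sigma_2^{-1}(l)}$, and then use $e_{ab}e_{cd}=\delta_{bc}e_{ad}$ to compute
$$C_{i,j}C_{k,l}=\delta_{\sigma_2^{-1}(j),\,\sigma_1^{-1}(k)}\,C_{i,l}=\delta_{k,\sigma(j)}\,C_{i,l},$$
the last step because $\sigma_2^{-1}(j)=\sigma_1^{-1}(k)$ is equivalent to $k=\sigma_1\sigma_2^{-1}(j)=\sigma(j)$. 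Thus the structure constant of Definition~\ref{2.2} is $\Phi(j,k)=\delta_{k,\sigma(j)}$.

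For condition (1) I treat the cellular (ungraded) structure. Since $\Phi(T,U)=\Phi(U,T)$ in any cellular algebra (as recorded after Definition~\ref{2.2}), if $\{C_{i,j}\}$ is cellular then $\delta_{k,\sigma(j)}=\delta_{j,\sigma(k)}$ for all $j,k$, which holds precisely when $\sigma^2=\mathrm{id}$. Conversely, assuming $\sigma^2=\mathrm{id}$ I define $\ast$ on the basis by $(C_{i,j})^\ast=C_{j,i}$ and extend $K$-linearly; this permutes the basis, hence is bijective, so $\ast(1)=1$ automatically. Anti-multiplicativity is checked on basis elements: $(C_{i,j}C_{k,l})^\ast=\delta_{k,\sigma(j)}\,C_{l,i}$ while $(C_{k,l})^\ast(C_{i,j})^\ast=C_{l,k}C_{j,i}=\delta_{j,\sigma(k)}\,C_{l,i}$, and these agree for all indices exactly because $\sigma^2=\mathrm{id}$. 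Axiom (GC1) is immediate, (GC2) is this $\ast$, and (GC3) follows (mod $A(<\lambda)=0$) from $C_{p,q}C_{i,j}=\delta_{i,\sigma(q)}\,C_{p,j}$, whose coefficient is independent of $j$. Hence condition (1) is equivalent to $\{C_{i,j}\}$ being an ungraded cellular basis.

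For condition (2) I treat the grading. The grading on $A$ is the one induced by the given function $\deg$ through $(\mathrm{GC}_d)$, namely $\deg(C_{i,j})=\deg(i)+\deg(j)$, and the remaining requirement is that this turns $A$ into a graded algebra, i.e.\ $A_rA_s\subseteq A_{r+s}$. Because every product of basis elements is again a scalar multiple of a basis element, it suffices to impose degree additivity on the one nontrivial product $C_{i,j}C_{\sigma(j),l}=C_{i,l}$, which reads
$$\bigl(\deg(i)+\deg(j)\bigr)+\bigl(\deg(\sigma(j))+\deg(l)\bigr)=\deg(i)+\deg(l),$$
that is, $\deg(j)+\deg(\sigma(j))=0$ for all $j$; this is exactly condition (2). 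Conversely, condition (2) guarantees degree additivity on every nonzero product, so the assignment is a genuine $\mathbb{Z}$-grading satisfying $(\mathrm{GC}_d)$. Combining the two paragraphs gives the asserted equivalence.

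The only genuine hazard here is bookkeeping: one must consistently use $C_{k,l}=e_{\sigma_1^{-1}(k),\sigma_2^{-1}(l)}$ and collapse the two Kronecker deltas into the single permutation $\sigma=\sigma_1\sigma_2^{-1}$. Once the product rule $C_{i,j}C_{k,l}=\delta_{k,\sigma(j)}\,C_{i,l}$ is in hand, condition (1) drops out as the symmetry of $\Phi$ (equivalently, the anti-automorphism property of $\ast$) and condition (2) as the additivity of $\deg$ on the essentially unique nonzero product, so I expect no step to require serious effort beyond this initial computation.
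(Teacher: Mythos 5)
Your proof is correct and follows essentially the same route as the paper: both arguments reduce everything to the explicit product rule $C_{i,j}C_{k,l}=\delta_{k,\sigma(j)}C_{i,l}$, extract condition (1) from the symmetry of $\Phi$ (equivalently, the anti-automorphism axiom) and condition (2) from degree considerations, and in the converse direction verify (GC2), (GC3) and degree additivity directly. The only cosmetic difference is that for the necessity of (2) the paper argues via the homogeneous idempotents $e_{ii}=C_{\sigma_1(i),\sigma_2(i)}$ being forced into degree $0$, whereas you impose additivity of the degree on a general nonzero product; the two are interchangeable.
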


\begin{proof}
$``\Longrightarrow"$ For all $1\leq j\leq n$,
 the cellularity of $C_{i,j}$ shows
\begin{eqnarray*}
e_{ij}e_{jk}&=&C_{\sigma_1(i)\sigma_2(j)}C_{\sigma_1(j)\sigma_2(k)}\\
&=&C_{\sigma_1(i)\sigma_1(j)}C_{\sigma_2(j)\sigma_2(k)}=e_{i, \,\sigma_2^{-1}\sigma_1(j)}e_{\sigma_1^{-1}\sigma_2(j),\,k},
\end{eqnarray*}
which implies $\sigma_2^{-1}\sigma_1(j)=\sigma_1^{-1}\sigma_2(j)$ for $1\leq j\leq n$, i.e., $(\sigma_1^{-1}\sigma_2)^2=\mathrm{id}$. While the degree of $\sigma_1^{-1}\sigma_2$ equals to that of $\sigma=\sigma_2\sigma_1^{-1}$. Hence $\sigma^2=id$.

Since $e_{ii}$ is an idempotent of $A$ for all $i$,  $C_{\sigma_1(i), \,\sigma_2(i)}$ is also an idempotent. Thus ${\rm deg}(\sigma_1(i))=-{\rm deg}(\sigma_2(i))$ for all $1\leq i\leq n$ and  $${\rm deg}(i)={\rm deg}(\sigma_1(\sigma_1^{-1}(i)))=-{\rm deg}(\sigma_2(\sigma_1^{-1}(i)))=-{\rm deg}(\sigma(i)).$$

$``\Longleftarrow"$ Firstly we prove (GC2). We need to check that the linear map $\ast$ sending $C_{ij}$ to $C_{ji}$ is an anti-morphism of $A$. Note that
$$e_{ij}^\ast=C_{\sigma_1(i),\,\sigma_2(j)}^\ast=C_{\sigma_2(j),\,\sigma_1(i)}=e_{\sigma_1^{-1}\sigma_2(j), \,\sigma_2^{-1}\sigma_1(i)},$$ which implies $$(e_{ij}e_{kl})^{\ast}=\delta_{jk}e_{il}^\ast=\delta_{jk}e_{\sigma_1^{-1}\sigma_2(l), \,\sigma_2^{-1}\sigma_1(i)}.$$ While
\begin{eqnarray*}
e_{kl}^{\ast}e_{ij}^{\ast}&=&e_{\sigma_1^{-1}\sigma_2(l), \,\sigma_2^{-1}\sigma_1(k)}e_{\sigma_1^{-1}\sigma_2(j), \,\sigma_2^{-1}\sigma_1(i)}\\&=&\delta_{\sigma_2^{-1}\sigma_1(k), \,\sigma_1^{-1}\sigma_2(j)}e_{\sigma_1^{-1}\sigma_2(l), \,\sigma_2^{-1}\sigma_1(i)}.
\end{eqnarray*}
Now $\sigma^2=\mathrm{id}=(\sigma_2^{-1}\sigma_1)$ makes $\sigma_2^{-1}\sigma_1=\sigma_1^{-1}\sigma_2$. Therefore $j=k$ if and only if $\sigma_2^{-1}\sigma_1(k)=\sigma_1^{-1}\sigma_2(j)$, that is, $\delta_{jk}=\delta_{\sigma_2^{-1}\sigma_1(k), \,\sigma_1^{-1}\sigma_2(j)}$. As a consequene,
$(e_{ij}e_{kl})^{\ast}=e_{kl}^{\ast}e_{ij}^{\ast}$. This completes the proof of (GC2).

Secondly we prove (GC3). According to the definition of $C_{ij}$, easy computations give that
$C_{ij}C_{kl}=\delta_{\sigma_2^{-1}(j),\,\sigma_1^{-1}(k)}C_{il}.$

Finally, assume that $C_{ij}C_{kl}\neq 0$. Then $\sigma_2^{-1}(j)=\sigma_1^{-1}(k)$, that is, $\sigma(j)=k$.
 Then ${\rm deg}(j)=-{\rm deg}(\sigma(j))=-{\rm deg}(k)$ implies ${\rm deg}(C_{ij}C_{kl})={\rm deg}(C_{il})$. \end{proof}

\begin{cor}\label{5.2}Keep notations as Proposition~\ref{5.1}.
If $\sigma(i)=i$ then ${\rm deg}(i)=0$.
\end{cor}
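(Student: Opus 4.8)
The plan is to read this off directly from part (2) of Proposition~\ref{5.1}. Under the standing hypothesis that the basis $\{C_{i,j}\mid 1\leq i,j\leq n\}$ is graded cellular, Proposition~\ref{5.1} applies and supplies the relation
\[
{\rm deg}(i)=-{\rm deg}(\sigma(i))\qquad\text{for all } 1\leq i\leq n.
\]
So the first and only real step is to specialize this identity to a fixed point of $\sigma$: assuming $\sigma(i)=i$, substitution into the displayed relation gives ${\rm deg}(i)=-{\rm deg}(\sigma(i))=-{\rm deg}(i)$, i.e. $2\,{\rm deg}(i)=0$.

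From here I would simply observe that ${\rm deg}$ takes values in $\mathbb{Z}$, which is torsion-free, so $2\,{\rm deg}(i)=0$ forces ${\rm deg}(i)=0$, as claimed. There is essentially no obstacle to overcome: the corollary is an immediate consequence of condition~(2) of Proposition~\ref{5.1}. The only two points worth flagging are that we are entitled to invoke that condition precisely because the graded cellular assumption is in force (this is what links the degree function to the permutation $\sigma$ in the first place), and that the final cancellation $2\,{\rm deg}(i)=0\Rightarrow{\rm deg}(i)=0$ relies on working over $\mathbb{Z}$ rather than over a ring with $2$-torsion.
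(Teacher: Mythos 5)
Your proof is correct and is exactly the argument the paper intends (the corollary is stated without proof, being an immediate consequence of Proposition~\ref{5.1}(2)): substituting $\sigma(i)=i$ into ${\rm deg}(i)=-{\rm deg}(\sigma(i))$ gives $2\,{\rm deg}(i)=0$, hence ${\rm deg}(i)=0$ in $\mathbb{Z}$. Nothing further is needed.
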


Using Proposition \ref{5.1}, we  can give a graded symmetric cellular structure of $A=M_n(K)$ as follows: Let $\Lambda=\{\diamond\}$, $M(\diamond)=\{1, 2, \cdots, n\}$, and set $C_{ij}=e_{i,n-j+1}$ for all $1\leq i,j \leq n$. Then $C_{ij}C_{kl}=\delta_{k,n-j+1}C_{il}$ shows $\{C_{ij}\mid 1\leq i, j\leq n\}$ is a cellular basis of $A$.

 Now if $n>1$ is odd then we define
\begin{equation*}
{\rm deg}(i)=
\begin{cases}
i,& i\leq \frac{n-1}{2};\\
0,& i=\frac{n+1}{2};\\
i-n-1,  & i\geq \frac{n+3}{2}.
\end{cases}
\end{equation*}
If $n$ is even  then we define
\begin{equation*}
{\rm deg}(i)=
\begin{cases}
i,& i\leq \frac{n}{2};\\
i-n-1,  & i> \frac{n}{2}.
\end{cases}
\end{equation*}
This makes $\{C_{ij}\mid 1\leq i, j\leq n\}$ a graded cellular basis of $A$. Now we define a homogeneous $K$-linear map $\tau$ from $A$ to $K$ by
\begin{equation*} \tau(C_{ij})=
\begin{cases}
1,& {\rm deg}(C_{ij})=0;\\
0,  & {\rm Otherwise.}
\end{cases}
\end{equation*}
Then $\tau(C_{ij}C_{kl})=\delta_{k,n-j+1}\delta_{i,n-l+1}=\tau(C_{kl}C_{ij})$ implies
that $\tau(ab)=\tau(ba)$ for all $a, b\in A$. Clearly $\tau$ is non-degenerate. As a Consequence,
$\tau$ is a homogeneous symmetrizing trace of degree 0.

Let us remark that the above argument can be generalized to the semisimple case, that is, we have  the following

\begin{prop}\label{5.3}
Let $A=\bigoplus M_{n_i}(K)$ be a semisimple algebra. If there exists some $n_i\neq 1$, then $A$ is a graded symmetric cellular algebra with a non-trivial grading.
\end{prop}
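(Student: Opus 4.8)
The plan is to reduce the statement to the single-block case $M_n(K)$ already treated in the paragraph following Proposition~\ref{5.1}, and then assemble the blocks into a direct sum. Write $A=\bigoplus_{i=1}^{r}M_{n_i}(K)$ and fix an index $i_0$ with $n_{i_0}\neq 1$, which exists by hypothesis. The whole point is that the hard case, namely equipping a single matrix algebra of size $>1$ with a non-trivial graded symmetric cellular structure, has already been carried out; what remains is the bookkeeping of gluing blocks together.

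First I would fix a graded symmetric cellular structure on each block. For a block with $n_i>1$ I would take exactly the cellular basis $\{C_{s,t}^{\diamond_i}=e_{s,\,n_i-t+1}\}$, the degree function, and the homogeneous symmetrizing trace $\tau_i$ of degree $0$ constructed after Proposition~\ref{5.1}; in particular the block $i_0$ then carries a non-trivial grading. For a block with $n_i=1$ I would use the trivial cell datum $M(\diamond_i)=\{1\}$, $C_{1,1}^{\diamond_i}=1_{M_1(K)}$ placed in degree $0$, with $\ast$ the identity and $\tau_i(1)=1$, which is plainly a non-degenerate symmetric associative form of degree $0$. In every case the symmetrizing trace is homogeneous of degree $0$.

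Next I would glue the blocks. Take $\Lambda=\{\diamond_1,\ldots,\diamond_r\}$ as a poset in which the $\diamond_i$ are pairwise incomparable (an antichain), set $M(\diamond_i)=\{1,\ldots,n_i\}$, let the cellular basis of $A$ be the union of the block bases, let $\ast$ be the direct sum of the block anti-automorphisms, and let the degree function on $\coprod_i M(\diamond_i)$ be the one chosen blockwise. Conditions (GC1) and (GC$_d$) are immediate. For (GC2) it suffices that $\ast$ is $K$-linear and reverses products, which holds because the blocks are mutually orthogonal two-sided ideals. For (GC3) note that $A(<\diamond_i)=0$ for every $i$ since $\Lambda$ is an antichain; writing $a=\sum_j a_j$ with $a_j\in M_{n_j}(K)$ and using orthogonality one has $a\,C_{S,T}^{\diamond_i}=a_i\,C_{S,T}^{\diamond_i}$, so the required expansion follows from (GC3) in block $i$ with $r_a(S',S):=r_{a_i}(S',S)$, still independent of $T$.

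Finally I would verify that $A$, with grading $A_k=\bigoplus_i \bigl(M_{n_i}(K)\bigr)_k$ and linear form $\tau=\bigoplus_i\tau_i$, is graded symmetric. The associated form $f(a,b)=\tau(ab)$ is associative, symmetric, and homogeneous of degree $0$; since its Gram matrix in the union basis is block diagonal with invertible blocks (by non-degeneracy of each $\tau_i$), $f$ is non-degenerate. Hence $\tau$ is a homogeneous symmetrizing trace of degree $0$ and $A$ is a graded symmetric cellular algebra. The grading is non-trivial because the block $M_{n_{i_0}}(K)$ contributes elements of nonzero degree, as is visible from the explicit degree function (e.g.\ $\deg(1)=1$ when $n_{i_0}>1$). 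The only point requiring genuine care is the direct-sum bookkeeping, i.e.\ the non-degeneracy of the glued form and the survival of (GC3) on the whole algebra via orthogonality of blocks; no computation beyond the single-block case is needed.
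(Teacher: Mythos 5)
Your proposal is correct and follows exactly the route the paper intends: the paper gives no separate proof of Proposition~5.3, remarking only that the explicit graded symmetric cellular structure built on $M_n(K)$ after Proposition~5.1 ``can be generalized to the semisimple case,'' and your blockwise gluing (antichain poset, union of cellular bases, direct sum of traces) is precisely that generalization, carried out with the details checked. No discrepancy to report.
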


Now let $A$ be a semisimple algebra with a homogeneous symmetrizing trace of degree $d$ and let $\{C_{S,T}^\lam\mid\lam\in\Lambda, S,T\in M(\lam)\}$ be a homogeneous cellular basis of $A$.
Then  $k_{\lam}^{-1}C_{S,S}^\lam D_{S,S}^\lam$ is
an idempotent of $A$ (Lemma \ref{2.6}), of course its degree is 0.  While Lemma~\ref{3.2} gives that the degree of $k_{\lam}^{-1}C_{S,S}^\lam D_{S,S}^\lam$ is $-d$. Hence $d=0$ and we have proved the following lemma.

\begin{lem}\label{5.4}
The degree of all symmetrizing traces $\tau$ of semisimple algebras  is 0.
\end{lem}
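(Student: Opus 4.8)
The plan is to build a homogeneous idempotent out of the cellular and dual-cellular bases and then read off its degree in two incompatible ways. Since $A$ is semisimple, Lemma~\ref{2.8} gives $k_\lam\neq 0$ for every $\lam\in\Lambda$; note that $\Lambda\neq\varnothing$ because $A$ is unital, so at least one $\lam$ and one $S\in M(\lam)$ are available. Fixing such $\lam$ and $S$, Lemma~\ref{2.6} shows that $e:=k_\lam^{-1}C_{S,S}^\lam D_{S,S}^\lam$ satisfies $e^2=e$, i.e. $e$ is an idempotent. First I would record that $e\neq 0$: the defining relation of the dual basis yields $\tau(C_{S,S}^\lam D_{S,S}^\lam)=1$, whence $\tau(e)=k_\lam^{-1}\neq 0$ and in particular $e\neq 0$.

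Next I would compute $\deg(e)$ twice. On one hand, $e$ is a scalar multiple of the product $C_{S,S}^\lam D_{S,S}^\lam$, so it is homogeneous, and Lemma~\ref{3.2} gives
\[
\deg(e)=\deg(C_{S,S}^\lam)+\deg(D_{S,S}^\lam)=-d.
\]
On the other hand, any \emph{nonzero} homogeneous idempotent must lie in degree $0$: if $\delta=\deg(e)$ then $e^2\in A_{2\delta}$, while $e=e^2\in A_\delta$, and since the sum $A=\bigoplus_{i}A_i$ is direct the relation $e\neq 0$ forces $\delta=2\delta$, i.e. $\delta=0$. Comparing the two values of $\deg(e)$ gives $-d=0$, so $d=0$, which is the assertion of the lemma.

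The argument is short and essentially forced once Lemmas~\ref{2.8}, \ref{2.6} and \ref{3.2} are invoked, so I do not expect a genuine obstacle. The one point deserving care is the nonvanishing of $e$: without it the degree of $e$ would not be well-defined and the squaring argument $\delta=2\delta$ would collapse harmlessly. This is exactly where the normalisation $\tau(C_{S,S}^\lam D_{S,S}^\lam)=1$ enters, turning the purely formal identity $e=e^2$ into the numerical conclusion $d=0$.
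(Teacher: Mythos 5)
Your proof is correct and follows exactly the paper's route: since $A$ is semisimple, $k_\lam\neq 0$, so $k_\lam^{-1}C_{S,S}^\lam D_{S,S}^\lam$ is a homogeneous idempotent, forcing its degree to be $0$, while Lemma~\ref{3.2} says that degree is $-d$. Your explicit check that the idempotent is nonzero (via $\tau(C_{S,S}^\lam D_{S,S}^\lam)=1$) is a detail the paper leaves implicit, but the argument is the same.
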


\begin{lem}\label{5.5}
Let $A$ be a semisimple $K$-algebra with a homogeneous symmetrizing trace $\tau$. Then $L_{\rm gr}(A)=\mathcal {Z}_A(A_0)$.
\end{lem}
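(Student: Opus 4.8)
The plan is as follows. Lemma~\ref{4.2} already gives $L_{\rm gr}(A)\subseteq\mathcal{Z}_A(A_0)$, so it suffices to prove that the two spaces have the same dimension. I would compute each dimension separately and then compare.

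For $L_{\rm gr}(A)$: by Lemma~\ref{5.4} the degree $d$ of $\tau$ is $0$, and since $A$ is semisimple Lemma~\ref{2.8} gives $k_\lam\neq 0$ for every $\lam\in\Lambda$. Lemma~\ref{4.2'} then says that the elements $k_\lam^{-1}e_{\lam,c}$ form a system of orthogonal idempotents, and $e_{\lam,c}\neq 0$ exactly when some $S\in M(\lam)$ satisfies $\deg(S)=c$. Setting $n_\lam:=\bigl|\{\deg(S)\mid S\in M(\lam)\}\bigr|$, the nonzero $e_{\lam,c}$ are linearly independent, whence $\dim L_{\rm gr}(A)=\sum_{\lam\in\Lambda}n_\lam$.

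For $\mathcal{Z}_A(A_0)$: each cell module $W(\lam)$ carries a grading via $\deg(C_S)=\deg(S)$, and conditions (GC$_d$) and (GC3) force the $A$-action to be homogeneous, so the representation $\rho\colon A\to\bigoplus_{\lam}\End_K(W(\lam))$ is a homomorphism of graded algebras. Since $A$ is semisimple it is bijective, and the equality $\dim A=\sum_\lam|M(\lam)|^2$ coming from the cellular basis forces every Wedderburn component to be the full matrix algebra $\End_K(W(\lam))$ (equivalently $\End_A(W(\lam))=K$); hence $\rho$ is a graded isomorphism identifying $A_0$ with $\bigoplus_\lam\bigoplus_j\End_K(W(\lam)_j)$ and $\mathcal{Z}_A(A_0)$ with the centralizer of this subalgebra. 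A block computation inside each $\End_K(W(\lam))$ shows that an endomorphism commuting with all of $\bigoplus_j\End_K(W(\lam)_j)$ must be block-diagonal and act as a scalar on each nonzero graded piece $W(\lam)_j$; the dimension of the centralizer in the $\lam$-factor is therefore $n_\lam$, and $\dim\mathcal{Z}_A(A_0)=\sum_\lam n_\lam$.

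Comparing the two counts gives $\dim L_{\rm gr}(A)=\dim\mathcal{Z}_A(A_0)$, which together with Lemma~\ref{4.2} yields $L_{\rm gr}(A)=\mathcal{Z}_A(A_0)$. The step I expect to be the main obstacle is the graded Wedderburn reduction: verifying that $\rho$ is genuinely a \emph{graded} isomorphism and that each component is a full matrix algebra over $K$, since only then does the grading of $W(\lam)$ translate directly into the block structure used in the centralizer computation. Once this is in place, the centralizer of $\End_K(W(\lam))_0$ is a routine calculation.
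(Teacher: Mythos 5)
Your proposal is correct, but it takes a genuinely different route from the paper's. The paper proves the inclusion $\mathcal{Z}_A(A_0)\subseteq L_{\rm gr}(A)$ directly inside the dual-basis calculus: by Lemma~\ref{2.8} the set $\{C_{S,S}^{\lam}D_{S,T}^{\lam}\}$ is a basis of the semisimple algebra $A$; an element $a\in\mathcal{Z}_A(A_0)$ is expanded in this basis, and commuting $a$ with the explicit degree-zero elements $C_{X,X}^{\epsilon}D_{X,X}^{\epsilon}$ (which, since $k_{\epsilon}\neq 0$, kills the off-diagonal coefficients) and then with $C_{P,P}^{\epsilon}D_{P,Q}^{\epsilon}$ for $\deg(P)=\deg(Q)$ (which equates the diagonal coefficients within each degree class) forces $a\in L_{\rm gr}(A)$. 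You instead take the inclusion $L_{\rm gr}(A)\subseteq\mathcal{Z}_A(A_0)$ from Lemma~\ref{4.2} and close the gap by a dimension count through a graded Wedderburn decomposition $A\cong\bigoplus_{\lam}\End_K(W(\lam))$. This works: the two points you must nail down are both fine. For the graded isomorphism, $A(<\lam)$ is a graded ideal (it is spanned by homogeneous basis elements), so for homogeneous $a\in A_i$ the structure constants $r_a(S',S)$ vanish unless $\deg(S')=i+\deg(S)$, making each $W(\lam)$ a graded module and $\rho$ a graded map; semisimplicity plus $\dim A=\sum_{\lam}|M(\lam)|^2$ (and the Graham--Lehrer fact that in the semisimple case the cell modules are exactly the simples) gives bijectivity with all components split. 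For $\dim L_{\rm gr}(A)=\sum_{\lam}n_{\lam}$ you also need each $e_{\lam,c}$ with nonempty index set to be nonzero; this follows since the $C_{S,S}^{\lam}D_{S,S}^{\lam}$ are pairwise orthogonal by Lemma~\ref{2.4}(6) and each satisfies $x^2=k_{\lam}x$ with $k_{\lam}\neq 0$, so multiplying $e_{\lam,c}$ by one of them recovers a nonzero element. What your approach buys is a structural explanation of the answer --- $\mathcal{Z}_A(A_0)$ is visibly the span of the block projections onto the graded pieces of each $W(\lam)$, which is exactly what the $k_{\lam}^{-1}e_{\lam,c}$ are --- at the cost of importing the graded Wedderburn machinery; the paper's computation is longer on indices but never leaves the toolkit of Lemmas~\ref{2.4}, \ref{2.6}, \ref{3.2} and \ref{5.4}.
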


\begin{proof}
It follows from Theorem \ref{4.4} that we only need to prove $\mathcal {Z}_A(A_0)\subseteq L_{\rm gr}(A)$. Since $A$ is semisimple, we have from Proposition \ref{5.3} that $A$ is graded symmetric cellular. Let $\{C_{S,T}^{\lam}\mid\lam\in\Lambda, S, T\in M(\lam)\}$ be a homogeneous cellular basis of $A$. Note that Lemma \ref{2.8} shows $\{C_{S,S}^{\lam}D_{S,T}^{\lam}\mid S,T\in M(\lam),\lam\in\Lambda\}$ is a basis of $A$. Assume that $$a=\sum_{S,T\in M(\lam),\, \lam\in\Lambda}r_{S,T,\lam}C_{S,S}^{\lam}D_{S,T}^{\lam}\in \mathcal {Z}_A(A_0).$$
Combining Lemmas \ref{5.4} and \ref{3.2}, $\deg (C_{X,X}^{\epsilon}D_{X,X}^{\epsilon})=0$  and $C_{X,X}^{\epsilon}D_{X,X}^{\epsilon}\in A_0$ for  $\epsilon\in \Lambda$, $X\in M(\epsilon)$.
Therefore \begin{eqnarray*}
          % \nonumber to remove numbering (before each equation)
\sum\limits_{P\in M(\epsilon)}r_{P,X,\epsilon}k_{\epsilon}C_{P,P}^{\epsilon}D_{P,X}^{\epsilon}
   &=&\sum_{\substack{\lam\in\Lambda,\\S,T\in M(\lam)}} r_{S,T,\lam}C_{S,S}^{\lam}D_{S,T}^{\lam}C_{X,X}^{\epsilon}D_{X,X}^{\epsilon}\\
  &=& C_{X,X}^{\epsilon}D_{X,X}^{\epsilon}\sum_{\substack{\lam\in\Lambda,\\S,T\in M(\lam)}}r_{S,T,\lam}C_{S,S}^{\lam}D_{S,T}^{\lam} \\
               &=& \sum\limits_{Q\in M(\epsilon)}r_{X,Q,\epsilon}k_{\epsilon}C_{X,X}^{\epsilon}D_{X,Q}^{\epsilon},
                     \end{eqnarray*}
where the first equality and the last one follow by Lemmas~\ref{2.4} and \ref{2.6}.

  Since $A$ is semisimple, $k_{\epsilon}\neq 0$ according to Lemma \ref{2.8}. Thus $r_{P, X, \epsilon}=0$ if $P\neq X$ for $\epsilon\in \Lambda$, $P, X\in M(\lam)$, that is,
$a=\sum\limits_{\lam\in\Lambda, \,S\in M(\lam)}r_{S,\lam}C_{S,S}^{\lam}D_{S,S}^{\lam}.$

Now assume that $P, Q\in M(\epsilon)$ and $\deg(P)=\deg(Q)$. Then Lemma \ref{3.2} and Lemma \ref{5.4} imply $\deg(C_{P,P}^{\epsilon}D_{P,Q}^{\epsilon})=0$.  Thus $aC_{P,P}^{\epsilon}D_{P,Q}^{\epsilon}=C_{P,P}^{\epsilon}D_{P,Q}^{\epsilon}a$. By employing
the same argument as above, one obtain $r_{P, \epsilon}=r_{Q, \epsilon}$. So $a\in L_{\mathrm{gr}}(A)$, that is, $\mathcal {Z}_A(A_0)\subseteq L_{\rm gr}(A)$.
\end{proof}

\begin{lem}
Let $A$ be a finite dimensional graded symmetric cellular algebra. If $L_{\rm gr}(A)=\mathcal {Z}_A(A_0)$ then $A$ is semisimple.
\end{lem}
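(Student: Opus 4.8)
The plan is to assume $L_{\rm gr}(A)=\mathcal{Z}_A(A_0)$ and deduce that $k_\lambda\neq 0$ for every $\lambda\in\Lambda$, so that Lemma~\ref{2.8} delivers semisimplicity. First I would dispose of the degree. Since the identity $1$ commutes with $A_0$ we always have $1\in\mathcal{Z}_A(A_0)$; on the other hand every $e_{\lambda,c}$ is homogeneous of degree $-d$ by Lemma~\ref{3.2}, so $L_{\rm gr}(A)\subseteq A_{-d}$. Thus the corollary following Lemma~\ref{4.2} applies: if $d\neq 0$ then $L_{\rm gr}(A)\subsetneqq\mathcal{Z}_A(A_0)$, contradicting the hypothesis. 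Hence $d=0$, every $e_{\lambda,c}$ lies in $A_0$, and in particular $1\in L_{\rm gr}(A)$.

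The core of the argument is then a short orthogonality computation. Writing $1=\sum_{\lambda,c}r_{\lambda,c}e_{\lambda,c}$ and multiplying on the left by a fixed $e_{\mu,c'}$, Lemma~\ref{4.2'} collapses every cross term because $e_{\mu,c'}e_{\lambda,c}=\delta_{\mu\lambda}\delta_{c'c}k_\mu e_{\mu,c'}$; this leaves $e_{\mu,c'}=r_{\mu,c'}k_\mu e_{\mu,c'}$, that is, $(1-r_{\mu,c'}k_\mu)e_{\mu,c'}=0$. Consequently, for each $\mu$ for which some $e_{\mu,c'}$ is nonzero we immediately get $k_\mu\neq 0$. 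The whole theorem therefore reduces to the single claim that $e_\mu:=\sum_{S\in M(\mu)}C_{S,S}^\mu D_{S,S}^\mu=\sum_c e_{\mu,c}$ is nonzero for every $\mu\in\Lambda$.

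To prove $e_\mu\neq 0$ I would use that $\tau$ is non-degenerate, so it suffices to exhibit one element pairing nontrivially with $e_\mu$. A direct manipulation with Lemma~\ref{2.4} and the dual-basis identity $\tau(C_{P,S}^\mu D_{S,S}^\mu)=\delta_{PS}$ gives $\tau(e_\mu C_{P,Q}^\mu)=\Phi(P,Q)$, the entries of the Gram matrix $G(\mu)$. Hence $e_\mu\neq 0$ as soon as $G(\mu)\neq 0$, which in particular covers every cell with $k_\mu\neq 0$. Note also that no individual summand can vanish, since $\tau(C_{S,S}^\mu D_{S,S}^\mu)=1$.

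The hard part will be the cells with $G(\mu)=0$ (necessarily $k_\mu=0$), where the clean orthogonality of Lemma~\ref{4.2'} and the level-$\mu$ pairing above give no information; there one must detect $e_\mu$ by pairing against cellular basis elements of strictly larger cells, exploiting the triangularity in Lemma~\ref{2.4}(7)--(8), and organise this as a downward induction on the poset $\Lambda$ (the extremal cells being immediate). The instructive model is $A=K[x]/(x^2)$, graded trivially so that $d=0$: its bottom cell has $G=0$ yet $e_\mu=x\neq 0$, witnessed by $\tau(e_\mu\cdot 1)=1$ rather than by any level-$\mu$ pairing. Once $e_\mu\neq 0$ is secured for all $\mu$, the orthogonality step forces $k_\mu\neq 0$ throughout and Lemma~\ref{2.8} completes the proof.
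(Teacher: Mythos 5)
Your overall strategy is the same as the paper's: from $1\in\mathcal{Z}_A(A_0)=L_{\rm gr}(A)$ write $1=\sum_{\lambda,c}r_{\lambda,c}e_{\lambda,c}$, apply the orthogonality of Lemma~\ref{4.2'} to get $(1-r_{\lambda,c}k_{\lambda})e_{\lambda,c}=0$, conclude $k_{\lambda}\neq 0$ for all $\lambda$, and invoke Lemma~\ref{2.8}. (Your preliminary reduction to $d=0$ is correct but not actually needed for this.) The point where you stop, however, is a genuine gap: you reduce the lemma to the claim that for every $\mu$ some $e_{\mu,c}$ is nonzero, you prove this only when $G(\mu)\neq 0$ via $\tau(e_{\mu}C^{\mu}_{P,Q})=\Phi(P,Q)$, and for the cells with $G(\mu)=0$ you offer only a plan (pairing against larger cells, downward induction on $\Lambda$) rather than an argument. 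Neither of your two computations settles those cells: the level-$\mu$ pairing gives $0=0$ when $G(\mu)=0$, and $\tau(e_{\mu,c})=|\{S\in M(\mu):\deg S=c\}|$ can vanish in positive characteristic. For what it is worth, the paper's own proof does not settle this either; it simply asserts that ``clearly $e_{\lambda,c_0}\neq 0$''.

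You already wrote down the fact that closes the gap --- $\tau(C^{\lambda}_{T,T}D^{\lambda}_{T,T})=1$, so each individual summand of $e_{\lambda,c_0}$ is nonzero --- but you deployed it in the wrong place. Use the single summand, not $e_{\lambda,c_0}$, as the test element. The same computation that proves Lemma~\ref{4.2'} (Lemma~\ref{2.4}(1),(6),(8) together with Lemma~\ref{2.6}; note that $D^{\lambda}_{T,T}C^{\varepsilon}_{S,S}=0$ unless $\varepsilon=\lambda$ and $S=T$, the case $\varepsilon<\lambda$ dying because $C^{\varepsilon}_{S,S}A\subseteq A(\leq\varepsilon)$ has zero coefficient on every $C^{\lambda}_{U,V}$) shows that for $T\in M(\lambda)$ with $\deg T=c_0$ one has $C^{\lambda}_{T,T}D^{\lambda}_{T,T}\cdot e_{\varepsilon,c}=\delta_{\lambda\varepsilon}\delta_{c_0c}\,k_{\lambda}\,C^{\lambda}_{T,T}D^{\lambda}_{T,T}$. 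Multiplying $1=\sum_{\varepsilon,c}r_{\varepsilon,c}e_{\varepsilon,c}$ by $C^{\lambda}_{T,T}D^{\lambda}_{T,T}$ and applying $\tau$ then yields $1=r_{\lambda,c_0}k_{\lambda}$, hence $k_{\lambda}\neq 0$, with no nonvanishing hypothesis on $e_{\lambda,c_0}$ at all. With that one substitution your argument is complete, and it also repairs the unjustified ``clearly'' in the paper's proof.
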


\begin{proof}
 Since $L_{\rm gr}(A)=\mathcal {Z}_A(A_0)$, we assume that $1=\sum\limits_{\varepsilon\in\Lambda, \,c \,\in\,\Z} r_{\varepsilon, \,c}e_{\varepsilon, \,c}$. For $\lam\in\Lambda$, $S\in M(\lam)$, Lemma~\ref{4.2'} implies $e_{\lam, \,c_0}=r_{\lam, \,c_0}k_{\lam}e_{\lam, \,c_0}$, where $c_0=\deg(S)$. Clearly, $e_{\lam, \,c_0}\neq 0$ and consequently $k_{\lam}\neq 0$. Thus $A$ is semisimple owing  to Lemma \ref{2.8}.
\end{proof}

Combining Lemmas 5.5 and 5.6, we obtain the main result of this section.

\begin{thm}\label{Them:semisimple}
Let $A$ be a finite dimensional graded symmetric cellular algebra. Then $A$ is semisimple if and only if $L_{\rm gr}(A)=\mathcal {Z}_A(A_0)$.
\end{thm}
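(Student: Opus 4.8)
The plan is to establish the biconditional by treating its two implications separately, exactly as they have been isolated in the two preceding lemmas; the theorem is then immediate by combining them. It is worth noting at the outset that one inclusion, $L_{\rm gr}(A)\subseteq\mathcal{Z}_A(A_0)$, holds for \emph{every} graded symmetric cellular algebra by Theorem~\ref{4.4}, so in both directions the real content concerns only the opposite inclusion $\mathcal{Z}_A(A_0)\subseteq L_{\rm gr}(A)$ and its equivalence with semisimplicity.

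For the direction ``$A$ semisimple $\Longrightarrow L_{\rm gr}(A)=\mathcal{Z}_A(A_0)$'', I would invoke Lemma~\ref{5.5}, whose hypotheses are satisfied because a graded symmetric cellular algebra carries a homogeneous symmetrizing trace by definition. The engine of that lemma is the following chain: Lemma~\ref{5.4} forces the trace to have degree $0$, so each $C_{S,S}^\lam D_{S,S}^\lam$ lies in $A_0$; then, writing an arbitrary $a\in\mathcal{Z}_A(A_0)$ in the basis provided by Lemma~\ref{2.8} and commuting it past the idempotents $k_\lam^{-1}C_{X,X}^\epsilon D_{X,X}^\epsilon$ (Lemmas~\ref{2.6} and~\ref{2.4}), one first kills the off-diagonal coefficients $r_{P,X,\epsilon}$ with $P\neq X$ and then shows that the surviving diagonal coefficients are constant on each degree-level of $M(\lam)$. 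This is precisely the assertion that $a$ is a $K$-combination of the $e_{\lam,c}$, i.e. $a\in L_{\rm gr}(A)$.

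For the converse ``$L_{\rm gr}(A)=\mathcal{Z}_A(A_0)\Longrightarrow A$ semisimple'', I would use Lemma~5.6: since $1\in\mathcal{Z}_A(A_0)=L_{\rm gr}(A)$, write $1=\sum_{\varepsilon,c}r_{\varepsilon,c}e_{\varepsilon,c}$, multiply by a single $e_{\lam,c_0}$ with $c_0=\deg(S)$, and apply the orthogonality relation $e_{\lam,c}e_{\mu,c'}=\delta_{\lam\mu}\delta_{cc'}k_\lam e_{\lam,c}$ of Lemma~\ref{4.2'} to collapse the sum to $e_{\lam,c_0}=r_{\lam,c_0}k_\lam e_{\lam,c_0}$. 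Because $e_{\lam,c_0}\neq 0$, this forces $k_\lam\neq 0$ for every $\lam\in\Lambda$, and Lemma~\ref{2.8} then yields semisimplicity.

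I expect the genuinely delicate point to be buried in the forward direction, namely the inclusion $\mathcal{Z}_A(A_0)\subseteq L_{\rm gr}(A)$ supplied by Lemma~\ref{5.5}: both eliminating the off-diagonal terms and forcing the diagonal coefficients to be level-constant rely on dividing by $k_\lam$, so semisimplicity (equivalently $k_\lam\neq 0$ for all $\lam$, via Lemma~\ref{2.8}) is exactly the hypothesis that makes this inclusion work. The converse, by contrast, is a short orthogonality computation once $1$ is expanded in the $e_{\varepsilon,c}$, with no subtlety beyond the nonvanishing of the individual $e_{\lam,c_0}$.
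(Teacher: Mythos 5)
Your proposal is correct and follows essentially the same route as the paper: the theorem is obtained by combining Lemma~\ref{5.5} for the forward implication with Lemma~5.6 for the converse, and your account of the inner workings of both lemmas (the degree-zero trace from Lemma~\ref{5.4}, the commutation with the idempotents $k_\lam^{-1}C_{X,X}^{\epsilon}D_{X,X}^{\epsilon}$, and the orthogonality collapse via Lemma~\ref{4.2'}) matches the paper's arguments. Your closing observation that the nonvanishing of $k_\lam$ is the crux of the forward inclusion is also exactly where the paper places the weight.
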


\subsection*{Acknowledgements}
%Both authors would like to thank the reviewers for the valuable suggestions
%which helped to clarify the whole paper.
The authors thank the Chern Institute of Mathematics in Nankai University for the hospitality during their visits.

\end{document}